\numberwithin{equation}{section}
\newcommand{\cc}{\mathfrak{c}}
\newcommand{\p}{\mathfrak{p}}
\renewcommand{\a}{\mathfrak{a}}
\newcommand{\h}{\mathfrak{h}}
\renewcommand{\l}{\mathfrak{l}}
\newcommand{\q}{\mathfrak{q}}
\newcommand{\C}{\mathbb{C}}
\newcommand{\GL}{\mathrm{GL}}
\newcommand{\R}{\mathbb{R}}
\newcommand{\Z}{\mathbb{Z}}
\newcommand{\Q}{\mathbb{Q}}
\newcommand{\A}{\mathbb{A}}
\newcommand{\I}{\mathbb{I}}
\DeclareFontFamily{OT1}{rsfs}{}
\DeclareFontShape{OT1}{rsfs}{n}{it}{<-> rsfs10}{}
\DeclareMathAlphabet{\mathscr}{OT1}{rsfs}{n}{it}
\newtheorem{lemma}{Lemma}
\newtheorem{thm}{Theorem}
\newtheorem{prop}{Proposition}
\newtheorem{cor}{Corollary}
\theoremstyle{definition}
\newtheorem{rem}{Remark}
\begin{document}

\author{Valentin Blomer}
\address{Universit\"at G\"ottingen, Mathematisches Institut,  Bunsenstr. 3-5, 37073 G\"ottingen} \email{blomer@uni-math.gwdg.de}

\author{Farrell Brumley}
\address{ Institut \'Elie Cartan, Universit\'e Henri Poincar\'e Nancy 1, BP 239, 54506 Vand{\oe}uvre Cedex, France} \email{farrell.brumley@iecn.u-nancy.fr}
\title[Families and non-vanishing]{Non-vanishing of $L$-functions, the Ramanujan conjecture, and families of Hecke characters}

\thanks{The first author is supported by a Volks\-wagen Lichtenberg Fellowship and a European Research Council (ERC) starting grant 258713. The second author is supported by the ANR grant Modunombres.}

\keywords{non-vanishing, automorphic forms, Hecke characters, Ramanujan conjecture}

\begin{abstract}  We prove a non-vanishing result for families of $\GL_n\times\GL_n$ Rankin-Selberg $L$-functions in the critical strip, as one factor runs over twists by Hecke characters.  As an application, we simplify the proof, due to Luo, Rudnick, and Sarnak, of the best known bounds towards the Generalized Ramanujan Conjecture at the infinite places for cusp forms on $\GL_n$.  A key ingredient is the regularization   of the units in residue classes by the use of an Arakelov ray class group. 
\end{abstract}

\subjclass[2000]{11F70, 11M41}

\maketitle

\section{Introduction}

In this paper we prove a non-vanishing result for Rankin-Selberg $L$-functions for cusp forms on $\GL_n$ when one factor ranges over twists by infinite order Hecke characters and give applications to bounds towards the Ramunujan conjecture.

\subsection{Infinite order Hecke characters}\label{inf ord} Before stating our precise results, we enunciate two ways, one automorphic, the other arithmetic, in which infinite order Hecke characters have been put to use to solve problems in number theory whose formulation is naturally posed over number fields.

The first is the principle that a cusp form (for the group $\GL_2$ at least) should always be taken together with its twists by mod 1 Grossencharacters.  This philosophy has been attributed to Sarnak and has its origins in the hybrid mean value estimate of \cite{S85}.  A striking justification is the recent article by Booker and Krishnamurthy \cite{BK} where it is shown that for an irreducible two dimensional complex representation $\rho$ of the Weil-Deligne group, if the Artin-Weil $L$-function $L(s,\rho\otimes\omega)$ is entire for all mod 1 Grossencharacters $\omega$ then $\rho$ is automorphic.

The second viewpoint, which is the one we shall emphasize in this paper, is that infinite order Hecke characters allow one to regularize the behavior of an erratic, arithmetically defined quantity.  The simplest example is the class number $h_K$ of a number field $K$, measured with respect to the discriminent.   Although there exist constructions of sparse sequences of $K$ for which $h_K$ is as large as possible (see  \cite{MW} for a detailed look at the classical example of $K=\Q(\sqrt{n^2+1})$, and Duke \cite{Du} for a generalization to certain cubic fields), within {\it arbitrary} sequences of number fields with a fixed signature, the class number $h_K$ varies wildly.  Recent experience has shown, however, (see \cite{EV} and \cite[Section 1.4]{MV}) that by making full use of the infinite places, i.e., using Arakelov structures, one can bypass the irregular behavior of $h_K$.

A more pertinent example to our line of inquiry is the size of the ray class group mod $\q$ for a {\it fixed} number field $K$ and varying integral ideals $\q$.  The size of the ray class group is determined by the size of the image of the units of $K$ within the group of invertible residue classes mod $\q$.  When $K$ has at least two archimedean places, the unit group is infinite, and its image can be as large as possible, conjecturally for a sequence of prime ideals $\q$ of positive density (\cite{CW, L}).  On the other hand, similarly to the specially constructed number fields in the previous example, there exists a sparse subsequence of ideals $\q$ for which one can prove that the ray class group is large.  This is the fundamental work of Rohrlich \cite{R}, in which he shows that for every $\varepsilon>0$ there are infinitely many square-free ideals $\q$ such that the size of the ray class group is at least $\mathcal{N}(\q)^{1-\varepsilon}$. This result  was of vital importance not only to Rohrlich's own work but also to several later papers concerning non-vanishing over number fields (\cite{BR}, \cite{LRS2}). Note, however, that the sparsity of the $\q$ constructed by Rohrlich becomes a liability when one wants to average over moduli, as in the work of Kim-Sarnak \cite{KimSa}.  As above, this problem can be resolved taking advantage of the infinite places, replacing the ray class group with something larger supporting infinite order characters.

We were led to consider these constructions in algebraic number theory as a result of our own work on extending the bounds towards the Ramanujan conjecture of Kim-Sarnak to arbitrary number fields.  This paper represents a fully expanded version of our coda in \cite{BB}.

\subsection{Description of main result}\label{descrip}

Our main result establishes the non-vanishing at points in the critical strip of Rankin-Selberg $L$-functions of $\GL_n$ cusp forms when they are twisted by a large, spectrally defined class of characters.  We will state this as Theorem \ref{non-van thm} in Section \ref{nonvanish} after the necessary notation has been developed.  Just to give the reader a preview of the contents of Theorem \ref{non-van thm}, we state below a special case, and in somewhat informal language, suppressing some technical conditions.

Let $\pi$ be a cusp form on $\GL_n$ over a number field $K$, where $n \geq 2$.  Fix an archimedean place $v$ of $K$.  Denote by $X$ the set of all Hecke characters whose conductor divides a fixed ideal $\cc$, whose component at $v$ is trivial, and whose components at archimedean places other than $v$ are restricted to a box around the origin.  Write $V$ for the total volume of all ramification conditions: this is roughly the norm of $\cc$ times the volume of the box.  Finally, fix any $\beta\in (1 - 2(n^2+1)^{-1},1)$.  Then, as long as the archimedean volume is taken large enough with respect to the field, we prove
\begin{equation*}
\sum_{\chi \in X} |L(\beta, \pi\otimes\chi\times\widetilde{\pi})| \gg_\varepsilon V^{1-\varepsilon}. 
\end{equation*}

Clearly, when $K=\Q$ or an imaginary quadratic field, the condition of being trivial at the one archimedean embedding renders meaningless the final condition on the other archimedean parameters.  In these cases, the set $X$ is the group of Dirichlet characters of conductor dividing $\cc$, and our result is not new (see \cite{LRS1} and the discussion after Theorem \ref{non-van thm}).  For all other fields, $X$ consists (primarily) of infinite order characters.  This greater generality  allows us, for instance, to take $\cc$ arbitrary in contrast to the non-vanishing result of \cite{LRS2}.  We present an application in  Section \ref{appl}.  The precise formulation of the above result, which we state in Theorem \ref{non-van thm}, seems to be the first non-vanishing theorem for twists by infinite order characters in the literature (although see \cite[Section 1.4]{MV}).

\subsection{Families of $L$-functions} In the branch of analytic number theory concerned with the analytic properties of $L$-functions associated to automorphic forms, the notion of a {\it family} of automorphic forms is a helpful  organizing principle.  Its aim is to group together ``like" forms according to sometimes well-defined, sometimes statistical or  phenomenological, shared traits.  Such traits can include the distribution of zeros of $L$-functions close to the point $s=1/2$ (the so called low lying zeros first studied by Iwaniec-Luo-Sarnak \cite{ILS}) and associated non-vanishing theorems at $s=1/2$, their functorial provenance such as base-change lifts, or simply the size of their conductor.  A recent preprint of Sarnak \cite{Sa} summarizes these various groupings, as well as many others.

The families of character twists occurring in our work arise as a spectral family.  They appear in the spectral support of idelic test functions defined by local ramification restrictions, once they are averaged over $K^\times$.  The process of defining these idelic test functions resembles the general Paley-Wiener theorem of Clozel-Delorme \cite{CD}, and we have borrowed various notations from this set-up.  In Section \ref{families} we develop a very general approach to defining spectrally generated families of characters.  In particular, the condition of being trivial at one given archimedean place, imposed in the informal discussion of Section \ref{descrip} is replaced by a more general linear condition.  For example, one could ask that the characters be trivial on the diagonally embedded copy of the positive reals, which is a rather standard normalization.  Indeed, in the case of $\cc=1$ this normalization recovers the characters of the extended class group $\widetilde{\rm Div}_K^0/K^\times$ of \cite{EV}.\footnote{We note that the definition Arakelov class group given by Schoof in \cite{Sch} is different and less robust than that of \cite{EV}, in that no use is made of the compact part of $K_v^\times$ for archimedean $v$.} 

What makes our family amenable to analysis and useful for applications is the equilibrium between the size of the family and their local ramification data, as well as their analytic conductor.  This is a quantification of the regularization effect on the ray class group mentioned in Section \ref{inf ord}, and it is the reason why no hypothesis need be imposed on the ideal $\cc$.  These two properties are recorded in Lemma \ref{lem1} and Remark \ref{cond rem}, respectively.

\subsection{Principal application}\label{appl} At this point we confine ourselves to explaining one of the consequences of Theorem \ref{non-van thm}.  An important observation of Luo, Rudnick, and Sarnak \cite{LRS1} shows that bounds towards the Ramanujan conjecture for the group $\GL_n$ can be deduced from the non-vanishing of certain $L$-functions.  Following their argument (which we shall briefly reprise in the paragraph preceding Remark \ref{cond rem}), we use Theorem \ref{non-van thm} to approach the Generalized Ramanujan Conjecture, thereby re-proving the central result of \cite{LRS2} at infinite places.  Our methods work equally well for ramified archimedean places, which was the case subsequently established by Müller-Speh in \cite{MS}.

For a place $v$ of $K$ let $m(\pi,v)=\max_j \sigma_\pi(v,j)$, if $\pi_v$ is non-tempered, and $m(\pi,v)=0$ otherwise.  For the notation relative to Langlands parameters, see Section \ref{local}.

\begin{cor}\label{rama} Let $\pi$ be a unitary cuspidal automorphic representation of $\GL_n(\A)$.  Let $v$ be an archimedean place. Then $m(\pi,v) \leq (1/2) - 1/(n^2+1)$.
\end{cor}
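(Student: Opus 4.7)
The plan is to apply the argument of Luo, Rudnick, and Sarnak \cite{LRS2} as a black box. LRS show that if there exists a Hecke character $\chi$ trivial at $v$ and some $\beta \in (1 - 2(n^2+1)^{-1}, 1)$ for which $L(\beta, \pi \otimes \chi \times \widetilde\pi) \neq 0$, then $m(\pi, v) \leq 1/2 - 1/(n^2+1)$. The task is therefore to produce such a non-vanishing specialization, which is precisely the content of Theorem \ref{non-van thm}.

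Accordingly, I would take the family $X$ of Hecke characters trivial at $v$, of conductor dividing some fixed ideal $\cc$, with suitably restricted archimedean parameters at places other than $v$. For volume $V$ large enough with respect to the field, Theorem \ref{non-van thm} provides the mean-value lower bound
\[
\sum_{\chi \in X} |L(\beta, \pi \otimes \chi \times \widetilde\pi)| \gg V^{1 - \varepsilon}
\]
for any $\beta \in (1 - 2(n^2+1)^{-1}, 1)$; in particular, for any such $\beta$ there is at least one $\chi \in X$ with $L(\beta, \pi \otimes \chi \times \widetilde\pi) \neq 0$. The LRS implication then yields $m(\pi, v) \leq 1/2 - 1/(n^2+1)$, as desired.

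For completeness, the LRS argument itself (which the authors announce will be reviewed just before Remark \ref{cond rem}) proceeds via an auxiliary isobaric representation of degree $n^2 + 1$, for example a variant of $\Pi = (\pi \otimes \chi) \boxtimes \widetilde\pi \boxplus \mathbf{1}$. The Rankin--Selberg $L$-function $L(s, \Pi \times \widetilde\Pi)$ has a Dirichlet series with non-negative coefficients, a pole at $s = 1$, and archimedean local factor at $v$ whose precise shape depends on the Langlands parameter $\sigma = m(\pi, v)$. An application of Landau's theorem on non-negative Dirichlet series, combined with the hypothesized non-vanishing at $\beta$, produces the inequality $2\sigma \leq \beta$; letting $\beta$ tend to $1 - 2/(n^2+1)$ gives the corollary. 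The hard part of this step, as anticipated in the original LRS work, lies in coordinating the archimedean structure with global positivity and extracting the optimal exponent $1/(n^2+1)$; it is exactly this coordination that dictates the admissible range of $\beta$ in Theorem \ref{non-van thm} and the degree $n^2+1$ of the auxiliary isobaric sum.
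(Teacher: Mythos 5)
Your first two paragraphs are essentially the paper's proof: Theorem \ref{non-van thm} supplies, for every $\beta\in(1-2(n^2+1)^{-1},1)$, a Hecke character $\chi$ trivial at $v$ with $L(\beta,\pi\otimes\chi\times\widetilde{\pi})\neq 0$, and one then invokes the LRS implication. (Your opening sentence is phrased as though a single $\beta$ would suffice; one needs $\beta$ running arbitrarily close to $1-2(n^2+1)^{-1}$, which your second paragraph then correctly supplies.)

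Your ``for completeness'' paragraph, however, misdescribes the LRS implication, and this is the one real gap. The deduction does not pass through an auxiliary isobaric representation of degree $n^2+1$, and it does not invoke Landau's theorem on Dirichlet series with non-negative coefficients --- that is the distinct Rankin--Serre route, which the authors explicitly \emph{contrast} with LRS in Section \ref{appl} rather than reproduce. The actual argument is a short statement about poles. By \eqref{mu-m}, $2m(\pi,v)$ is the real part of the rightmost pole of the local factor $L(s,\pi_v\times\widetilde{\pi}_v)$, which is also the local factor at $v$ of $\Lambda(s,\pi\otimes\chi\times\widetilde{\pi})$ whenever $\chi_v$ is trivial. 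The completed $L$-function $\Lambda(s,\pi\otimes\chi\times\widetilde{\pi})$ is holomorphic on $0<\Re s<1$, and each archimedean local factor is a product of Gamma factors, hence nowhere zero. Therefore, if $L(\beta,\pi\otimes\chi\times\widetilde{\pi})\neq 0$ for some $\chi$ trivial at $v$, the local factor at $v$ cannot have a pole at $s=\beta$; running $\beta$ over $(1-2(n^2+1)^{-1},1)$ forces $2m(\pi,v)\leq 1-2(n^2+1)^{-1}$, i.e.\ $m(\pi,v)\leq 1/2-1/(n^2+1)$. The exponent $n^2+1$ does not arise from any degree-$(n^2+1)$ auxiliary $L$-function in this deduction: it comes from the optimization $Y=V^{-(n^2+1)/2}$ balancing the error terms at the end of the proof of Theorem \ref{non-van thm}.
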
 

The original proof of Corollary \ref{rama} in \cite{LRS2} uses Deligne's estimate \cite{De} for hyper-Kloosterman sums together with   Rohrlich's deep work \cite{R} which is based among other things on the large sieve and the Bombieri-Vinogradov theorem over number fields.  Our proof of Corollary \ref{rama} avoids all of these results.  In fact, the local estimates of Section \ref{local section} are elementary, being obtained by standard stationary phase type arguments for real and $p$-adic oscillatory integrals.\footnote{Although the degree of generality in which we state Theorem \ref{non-van thm} requires the use of profound results of Deligne, for the application to Corollary \ref{rama} his work is not needed.  We will discuss this aspect of our work more thoroughly in Remark \ref{c=1 rmk}.}  In this way the classical method of Landau \cite{La}, Rankin \cite{Ra}, and Serre \cite{Se} is put on the same footing as that of Luo-Rudnick-Sarnak \cite{LRS1, LRS2}: for the latter just like the former, no deep results in algebraic geometry are needed to obtain the same quality bounds.

We remark that one can easily modify our proof of Corollary \ref{rama} to apply to all places $v$, archimedean or not.  There are two ways to do this; we briefly describe them here although neither approach, for the sake of a simplified presentation, will be pursued in this paper.

The first would be to adopt the method in \cite{BB}, with the necessary modifications for the present context, replacing the symmetric square with the Rankin-Selberg $L$-function and eliminating the average over moduli.  This argument does not proceed by non-vanishing of $L$-functions but rather by averaging their Dirichlet series coefficients.  One may view this approach as a direct extension to all number fields and all places (without any loss in the degree of the number field) of Rankin's method \cite{Ra}, as generalized by Serre \cite{Se}.  See the beginning of Section \ref{local section} for more details.

Alternatively, one could deduce Corollary \ref{rama} for finite places from the non-vanishing of $L$-functions, as was done in \cite{LRS1} and \cite{LRS2}.  To do so, one would need to broaden the class of characters introduced in Section \ref{families} to include those that are trivial at a fixed \emph{finite} place.  We have avoided doing this here since such characters would not enjoy the equilibrium properties described in Lemma \ref{lem1} and Remark \ref{cond rem}, even after allowing for them to be of infinite order.

\subsection{Organization of paper} In Section \ref{notation} we review the necessary local and global structures associated to number fields.  In particular we define the class of Hecke characters used in our theorems and prove that their size behaves regularly with respect to the local ramification conditions.

In Section \ref{RS} we review the theory of Rankin-Selberg $L$-functions.

In Section \ref{nonvanish} we state our main theorem, Theorem \ref{non-van thm}, and comment on its relation to the existing literature.  The proof of Theorem \ref{non-van thm} derives from an application of the functional equation of Rankin-Selberg $L$-functions, along with the positivity of their coefficients, and local estimates on the resulting integral transforms.  Sections \ref{sumform}, \ref{local section}, and \ref{non-van section} then give the details of the proof of Theorem \ref{non-van thm}.

In Section \ref{sumform} we prove the summation formula which encodes the functional equation of the Rankin-Selberg $L$-functions $L(s,\pi\otimes\chi\times\widetilde{\pi})$ as $\chi$ ranges through all Hecke characters of prescribed ramification.

In Section \ref{local section} we prove the required estimates on the local transforms, and then package them together in the form of an $S$-adic statement.

Finally, in Section \ref{non-van section}, we put all the ingredients together to complete the proof.

\subsection*{Acknowledgements} Parts of this work were completed during visits to Université de Nancy, Universität Göttingen, the Institute for Advanced Study, and the Centre Interfacultaire Bernoulli of the EPFL.  We would like to thank these institutions for their financial support and the hospitable work environments they offered.

\section{Preliminaries}\label{notation}

Let $K$ be a number field of signature $(r_1,r_2)$.  Let $d=r_1+2r_2$ be the degree of $K$ over $\Q$ and $r=r_1+r_2$ the number of inequivalent archimedean embeddings.  Let $\mathcal{O}_K$ denote the ring of integers of $K$, $\mathcal{O}_K^\times$ the unit group, and $\mu$ the group of torsion units.  Let $Cl$ denote the class group of $K$ and put $h=|Cl|$.  The norm map is defined to be the completely multiplicative function on integral ideals whose value at prime ideals is the cardinality of the associated residue field.  For an integral ideal $\cc$ of $\mathcal{O}_K$ we write $\mathcal{N}(\cc)$ for the norm of $\cc$ and put $\phi(\cc)=| (\mathcal{O}_K/\cc)^\times|$. If $S$ is a finite set of places, we denote by $P_K(S)$ the group of fractional principal ideals prime to $S$. This is naturally isomorphic with $\mathcal{O}_S^{\times}\backslash K^{\times}$, where $\mathcal{O}_S^{\times}$ are the $S$-units of $K$. 
 
If $v$ is a place of $K$, we let $K_v$ be the completion of $K$ relative to the norm $|\cdot |_v$ induced by the normalized valuation at $v$.  If $v=\C$, then $|\cdot |_v=|\cdot |^2$, the square of the modulus. 

For $v\mid\infty$ let $U_v$ be $\{\pm 1\}$ or $U(1)$ accordingly to whether $v$ is real or complex, respectively.  Let $U_\infty=\prod_{v\mid\infty}U_v$ and $\a=\R^r$.  Then $K_\infty^{\times} = \prod_{v \mid \infty} K_v^{\times}$ decomposes as $U_{\infty} \times \R_{>0}^r$ and therefore one has an isomorphism $K_\infty^{\times} \simeq  U_\infty\times\a$.  We will write 
 \begin{equation}\label{log}
   \log:K_\infty^\times\rightarrow\a,  \quad (x_v)_{v \mid \infty} \mapsto (\log|x_v|_v)_{v \mid \infty}
 \end{equation}  
for the projection onto the second factor.

When $v=\p$ is finite we write $\mathcal{O}_\p$ for the ring of integers in $K_\p$ and $\wp$ for its unique maximal ideal.  Let $\varpi_\p$ be any generator of $\wp$.  Put $U_\p=\mathcal{O}_\p^\times$ for the unit group.  For an integer $e\geq 1$ the degree $e$ neighborhood $1+\wp^e$ of $1$ in $U_\p$ is denoted by $U_\p^{(e)}$.

Let $\A$ be the adele ring and $\I$ the idele group of $K$. Write $|\cdot |_\A=\prod_v |\cdot |_v$ for the idelic norm, and put $\I^1$ for the subgroup of norm $1$ ideles.  Let $\Delta:\R_{>0}\rightarrow\I$ be the norm preserving embedding sending the positive real number $t$ to the idele having all finite components equal to $1$ and the $v$ component, for $v\mid\infty$, equal to $t^{1/d}$.  

Denote the idele class group by $\mathscr{C}=K^\times\backslash\I$.  Since $K^\times\subset \I^1$, we may consider the quotient $\mathscr{C}^1=K^\times\backslash\I^1$, a compact group.  For $\cc$ an integral ideal of $\mathcal{O}_K$ we define $\mathscr{C}({\cc})=\mathscr{C}/U({\cc})$ and $\mathscr{C}^1({\cc})=\mathscr{C}^1/U({\cc})$, where
\begin{equation*}
U({\cc})=\prod_{\p\nmid\cc}U_\p\prod_{\p^{e_\p}||\cc}U_\p^{(e_\p)}.
\end{equation*}
These two groups, $\mathscr{C}({\cc})$ and $\mathscr{C}^1({\cc})$, may be given a more explicit description by means of the Strong Approximation Theorem for $\I$.  This states that
\begin{equation*}
\I=\bigsqcup_{i=1,\ldots ,h} K^\times a_i\prod_\p U_\p K_\infty^\times,
\end{equation*}
for elements $a_1,\ldots ,a_h\in\I_f$.  If we then put
\begin{equation*}
V({\cc})=\prod_{\p^{e_\p}||\cc}U_\p/U_\p^{(e_\p)}\times U_\infty,
\end{equation*}
we find that
\begin{equation}\label{iso0}
\mathscr{C}(\cc)\simeq\left(\mathcal{O}_K^\times\backslash (V(\cc)\times\a)\right)^h.
\end{equation}

\subsection{Characters} Let $\widehat{\I}$ (resp., $\widehat{\I}^1$) be the group of continuous unitary characters of $\I$ (resp., $\I^1$).  We may and will identify $\widehat{\I}^1$ (resp., $\widehat{\mathscr{C}^1}$) with the subgroup of $\widehat{\I}$ (resp., $\widehat{\mathscr{C}}$) consisting of characters trivial on $\Delta(\R_{>0})$.  When $\chi\in\widehat{\mathscr{C}}$ we agree to write $\chi=\omega |\cdot |_\A^{it}$ for the unique $t\in \R$ such that $\omega\in\widehat{\mathscr{C}^1}$.  We will see in a moment that the characters of $\mathscr{C}(\cc)$ are precisely the Hecke characters of conductor dividing $\cc$.

At archimedean $v$ a character $\chi_v\in\widehat{K_v^\times}$ takes the form
\begin{enumerate}
\item[$\cdot$] $\chi_v(x)=\text{sgn}(x)^{\delta_v} |x|^{i\tau_v}$ for some $\delta_v\in\{0,1\} $ and $\tau_v\in\R$ if $v=\R$,
\item[$\cdot$] $\chi_v(z)=(z/|z|)^{\delta_v}|z|^{2i\tau_v}$ for some $\delta_v\in\Z$ and $\tau_v\in\R$ if $v=\C$.
\end{enumerate}
In general we shall write
\begin{equation*}
\chi_v(x)=\delta_v(x)e^{i\tau_v(\log |x|_v)},
\end{equation*}
where the real number $\tau_v$ is now thought of as a linear map on the reals, and $\delta_v$, by slight abuse of notation, is in $\widehat{U}_v$.  A character of $U_v$ or $U_\infty$ will generally be denoted $\delta_v$ or $\delta_\infty$, respectively. We adopt the convention that a character of $V(\cc)$ will be denoted $\delta$, with no subscript.

Now let $\a^*={\rm Hom}(\a,\R)$.  A character $\chi$ of $K_{\infty}^{\times}$ is called unramified if it is trivial on $U_\infty$.  Such characters are obtained from $\a^*$ by setting $\chi(x)=e^{i\tau(\log x)}$ for $\tau\in\a^*$ and $x\in K_\infty^\times$, where we have made use of the definition \eqref{log}.

With this notation in place, we deduce from \eqref{iso0} that
\begin{equation}\label{iso1}
\widehat{\mathscr{C}(\cc)}\simeq \left(\{(\delta,\tau)\in \widehat{V(\cc)}\times  \a^*: e^{i\tau(\log u)}\delta(u)=1\; \forall\; u\in\mathcal{O}_K^\times\}\right)^h. 
\end{equation}
In classical language this corresponds to the well-known fact that a Hecke character of conductor dividing  $\cc$ is, up to a class group character, determined by a character of $(\mathcal{O}_K/\cc)^{\times}$, and a character of $K_{\infty}$, whose product is trivial on units. 
Fixing the principal branch of the logarithm  we may write the above condition as
\begin{equation}\label{compat}
\tau(\log u)\in -\arg \delta(u)+2\pi\Z\quad\text{for every }\; u\in \mathcal{O}_K^\times,
\end{equation}
where $\arg \delta(u)\in (-\pi,\pi)$.  For $\chi\in \widehat{\mathscr{C}(\cc)}$ we write $(\delta(\chi),\tau(\chi))\in\widehat{V(\cc)}\times\a^*$ for the coordinates of $\chi$.

We proceed to recall the definition of the analytic conductor of a character $\chi \in \widehat{\I}$.  We first recall the notion of the conductor of a local character $\chi_v$ of $K_v^\times$.  At finite places $v=\p$, one has
\begin{equation*}
\chi_\p(x)=\delta_\p(x) |x|_\p^{it_\p},
\end{equation*}
where $\delta_\p\in\widehat{U}_\p$ and $t_\p\in\R$ is well-defined up to a multiple of $2\pi/\log\mathcal{N}(\p)$.  The continuity of $\chi_\p$ implies the existence of a largest open compact subgroup $U_\p^{(r_\p)}$ of $U_\p$ on which $\delta_\p$ is trivial.  The local conductor at $v=\p$ is then
\begin{equation}\label{p cond}
\mathcal{C}(\chi_\p):=\mathcal{N}(\p)^{r_\p}.
\end{equation}

At archimedean $v$ the conductor of $\chi_v$ is taken to be
\begin{equation}\label{v cond}
\mathcal{C}(\chi_v):=(1+|\delta_v + i\tau_v|^{ [K_v : \Bbb{R}]}) \qquad \text{ for } v \mid\infty.
\end{equation}

Now let $\chi=\prod_v\chi_v \in\widehat{\mathscr{C}}$ be a (unitary) Hecke character.  For almost all $\p$ we have $\cc(\chi_{\p}) = \mathcal{O}_\p$ so that $\mathcal{C}(\chi_\p)=1$.  We write $\cc(\chi)=\prod\p^{r_\p}$, the product taken over all finite primes $\p$.  In the terminology of \cite{IS}, one says that $\chi$ has  analytic conductor $\mathcal{C}(\chi):=\prod_v \mathcal{C}(\chi_v)$.  It is expected that the analytic conductor is the proper measure of complexity of a Hecke character $\chi$ (and of automorphic forms in general).  

\subsection{Normalization of Haar measures}
We fix a non-trivial character $\psi_v$ of $K_v$ by taking $\psi_v(x)=\exp(2\pi i x)$ when $v=\R$, $\psi_v(x)=\exp(2\pi i(x+\overline{x}))$ when $v=\C$, and $\psi_\p$ the unique additive character trivial on $\mathfrak{d}_{\mathfrak{p}}^{-1}$ and on no larger subgroup when $v=\p$ is non-archimedean.  Let $dx_v$ be the self-dual Haar measure on $K_v$.  Explicitly, $dx_v$ is Lebesque measure if $v$ is real, twice the Lebesque measure if $v$ is complex, and the unique Haar measure such that $\mathcal{O}_\mathfrak{p}$ has volume $\mathcal{N}(\mathfrak{d}_\p)^{-1/2}$ if $v=\p$ is finite.  

On $K_v^{\times}$ we choose the normalized Haar measure $d^\times x_v = \zeta_v(1)dx_v/|x_v|_v$, where $\zeta_v$ is the Tate local zeta function at $v$. We let $d^\times x$ be the measure on $\I$ that on the standard basis of open sets of $\I$ coincides with $\prod_v d^\times x_v$; this descends to a quotient measure on $\mathscr{C}$.

The image measure of $d^\times x$ under the isomorphism
\begin{equation*}
\mathscr{C}\overset{\sim}{\longrightarrow}\mathscr{C}^1\times\R_{>0}
\end{equation*}
is the product of the Haar measure of volume $c_K^{-1}$ on the first factor, where $c_K^{-1} = \underset{s=1}{\rm Res}\; \zeta_K(s)$, and the multiplicative measure $t^{-1}dt$ on the second factor (see \cite[VII.6.\ Prop.\ 12]{We}).

Next we normalize measures on the relevant groups of characters.  We refer to \cite[Section 2]{BB} for the notation relative to Fourier-Mellin transforms.  We normalize the Haar measure $d\chi_v$ on $\widehat{K_v^\times}$ so as to recover the Mellin inversion formula.  Explicitly $d\chi_v$ is given by
\begin{displaymath}
c_v \sum_m \int_{(\sigma)} g(s, \eta_m) \frac{ds}{2\pi i}, \qquad \sum_{\eta \in \widehat{U_{\mathfrak{p}}}} \int_{\sigma -\frac{i\pi}{\log \mathcal{N}(\mathfrak{p})}}^{\sigma+\frac{i \pi}{\log \mathcal{N}(\mathfrak{p})}} g(s, \eta) \log\mathcal{N}(\mathfrak{p})\frac{ds}{2\pi i},
\end{displaymath}
for $v\mid\infty$ and $\mathfrak{p}$, respectively. Here $c_\R=1/2$ and $c_\C=1/(2\pi)$.  The corresponding measure on 
\begin{equation}\label{decomp}
  \widehat{\mathscr{C}}\simeq\widehat{\mathscr{C}^1}\times\R
\end{equation}  
is the product of $c_K$ times the counting measure on the first factor and $1/2\pi$ times Lebesgue measure on the second factor.

\subsection{Families of Hecke characters}\label{families}

The set of all unitary Hecke characters is a disjoint union of continuous families, each given by $\chi |\cdot|_\A^{it}$ for $t$ varying in $\R$.  The presence of these continuous families is a direct expression of the non-compactness of the idelic quotient $\mathscr{C}=K^\times\backslash\I$.  We index them by the discrete group $\widehat{\mathscr{C}^1}$.  Thus
\begin{equation*}
\widehat{\mathscr{C}}=\bigsqcup_{\omega\in\widehat{\mathscr{C}^1}}\{\omega |\cdot |_\A^{it}: t\in \R\}.
\end{equation*}

\subsubsection{Discrete subfamilies}\label{discrete} In this section we discuss a general method for isolating a discrete subset of $\widehat{\mathscr{C}}$ by means of linear constraints on the parameters $\tau_v$, for $v\mid\infty$.  This method is valid for all number fields but becomes trivial for fields with only one archimedean embedding.

We regard $\a$ and $\a^*$ as Euclidean spaces for the standard inner product.  Let $\a_0$ be the trace-zero hyperplane in $\a$ and let $\l_0$ be the orthogonal complement of $\a_0$ in $\a$.   Denote by $\a^*=\a_0^\perp\oplus \h_0$ the dual decomposition.  Thus $\h_0=\l_0^\perp$ is the trace-zero hyperplane in $\a^*$.  As $\tau$ runs over the line $\a_0^\perp=\R.(1,\ldots ,1)$, the characters $e^{i(\tau\circ\log)}$ describe the characters $|\cdot |_\infty^{it}$ for $t\in \R$, while as $\tau$ runs over $\h_0$, the characters $e^{i(\tau\circ\log)}$ comprise the totally unramified characters of $K_\infty^\times$ trivial on $\Delta(\R_+)$.

The Hecke characters having $\tau(\chi)\in\h_0$ are precisely those in $\widehat{\mathscr{C}^1}$.  As natural as this normalization is, however, it is important in applications to work with families whose exponents $\tau(\chi)$ are taken to lie in a more a general hyperplane $\h$.  On the other hand, if $\h$ contains the line of exponents $\a_0^\perp$, then the set of Hecke characters having exponents in $\h$ is not discrete.  We are lead therefore to consider the following definition.

A hyperplane $\h\subset \a^*$ is called {\it admissible} if $\a_0^\perp \subsetneq \h$.  

We fix now an admissible hyperplane $\h\subset\a^*$.  Write $\Lambda_0=\log\mathcal{O}_K^\times\subset\a_0$ and let
\begin{equation*}
\Lambda_0^\perp=\{\tau\in\a^*: \tau(x)\in 2\pi\Z\; \forall\; x\in\Lambda_0\}
\end{equation*}
be the dual of $\Lambda_0$ {\it within all of $\a^*$}.  It is a disjoint union of continuous families.  Let $\mathcal{L}_\h=\Lambda_0^\perp\cap\h$. If $\h=\l^\perp$ for a line $\l\subset\a$ then $\mathcal{L}_\h$ is the discrete family of $\tau\in\Lambda_0^\perp$ that are trivial on $\l$. See Figures 1 and 2 for an example in the real quadratic case.\\

\includegraphics[height=5.5cm]{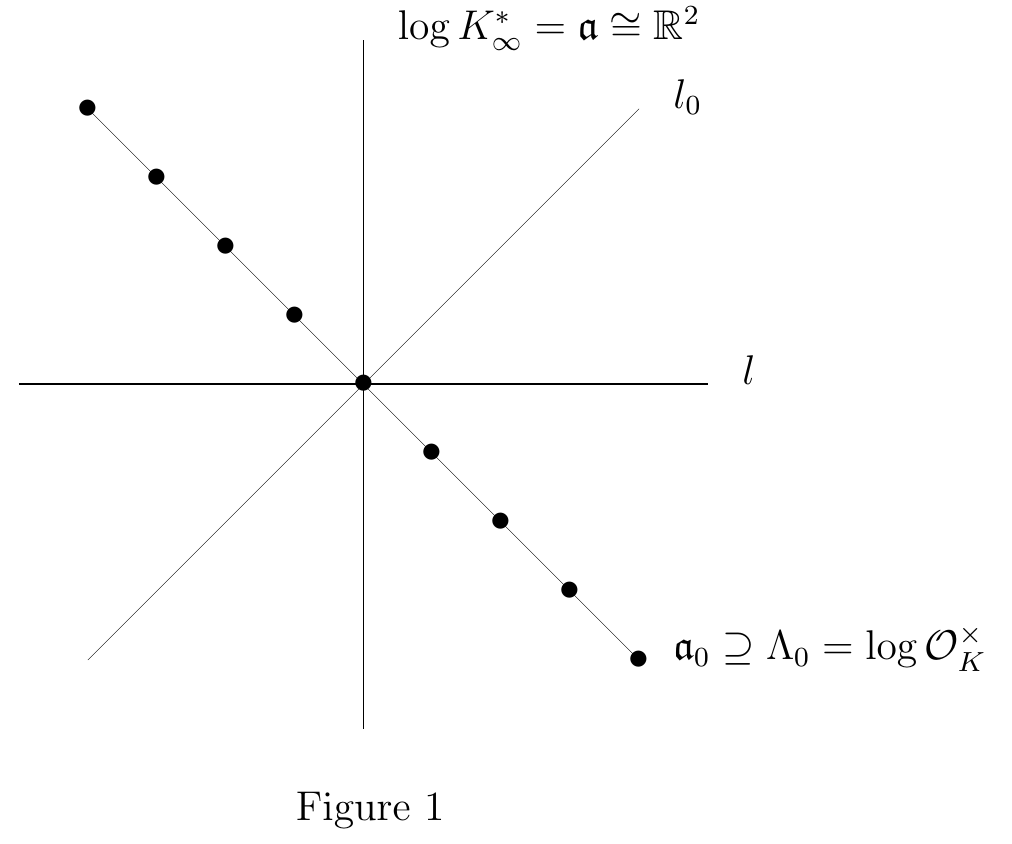}  \hspace{1cm} \includegraphics[height=5.5cm]{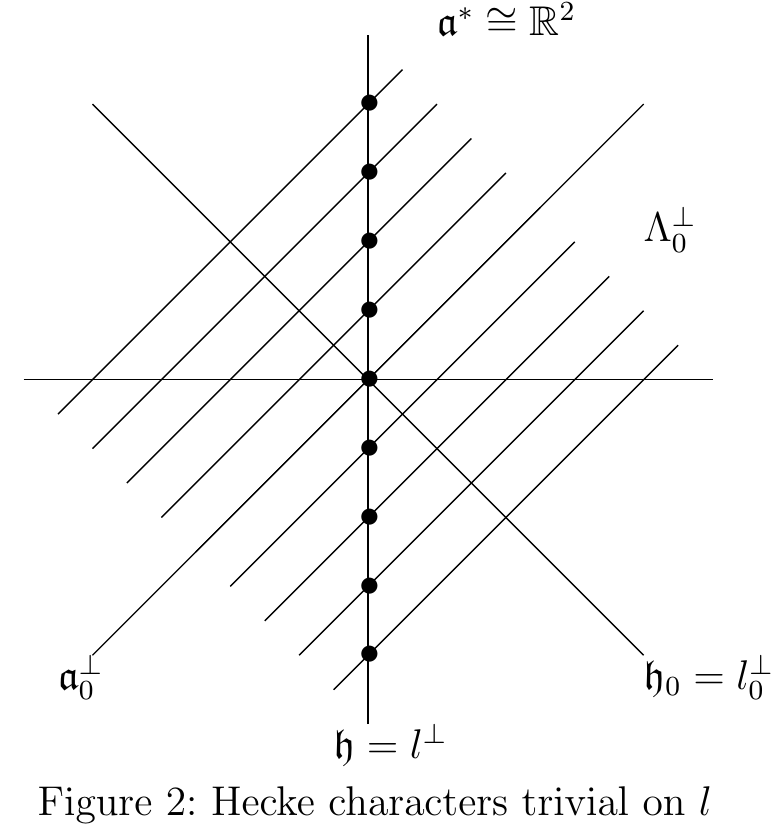}  \\

To be more concrete, let $\h$ be defined by the linear equation $\sum_v \alpha_v\tau_v=0$.  So $\h$ admissible means $\sum_v \alpha_v \not=0$.  Now let $u_1,\ldots ,u_{r-1}$ be a system of generators for $\mathcal{O}_K^\times/\mu$, and for $v\mid\infty$ denote by $u_j^{(v)}$ the image of $u_j$ in $K_v$.  Then the admissibility of $\h$ is equivalent to the regularity of the $r \times r$-matrix 
\begin{equation*}
M_\h:=\left(\; \begin{matrix} \boxed{(\log |u_j^{(v)}|_v)_{\substack{1 \leq j \leq r-1\\ v \mid\infty}}} \\ (\alpha_v)_{v \mid\infty} \end{matrix}\; \right).
\end{equation*}
(See \cite[Page 78]{Bu} for an example calculation for the totally real case.)  Finally the discreteness of $\mathcal{L}_\h$ is deduced from the invertibility of $M_\h$ since the volume of the fundamental mesh of $\mathcal{L}_\h$ is given by the absolute value of the determinant of $M_\h$.

\subsubsection{Prescribing ramification}  Fix a norm on $\a^*$ and for $T\geq 1$ let $B(0,T)\subset\a^*$ denote the ball of radius $T$ about the origin.  Let $\h\subset\a^*$ be an admissible hyperplane.  Let $\cc$ be an integral ideal of $\mathcal{O}_K$, and $D\subset \widehat{U}_\infty$ a finite subset.  We consider
\begin{equation}\label{def of X}
X(\cc,D,T)=\{\chi\in \widehat{\mathscr{C}(\cc)}:  \delta_\infty(\chi)\in D ,\; \tau(\chi)\in B(0,T)\cap\h\}.
\end{equation}
This is the most general form of families we will consider in this paper. Let
\begin{equation*}
V(\cc,D,T):=\phi(\cc)|D| {\rm vol}(B(0,T)\cap\h)
\end{equation*}
be the product of local volumes.  This quantity measures the size of the family of $\chi\in\widehat{\I}$ having ramification prescribed by $\cc$, $D$, and $B(0,T)\cap\h$ but subject to no global invariance requirement.  

\begin{lemma}\label{lem1} If $T \gg 1$ then $\left|X(\cc,D,T)\right| \asymp   V(\cc,D,T)$.\end{lemma}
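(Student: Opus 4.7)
The plan is to count $X(\cc,D,T)$ using the parametrization \eqref{iso1} together with the compatibility relation \eqref{compat}, separating the counting of the ramification datum $\delta\in\widehat{V(\cc)}$ from that of the infinitesimal parameter $\tau\in\a^*$. Up to the $h$-fold contribution of the class group, an element $\chi\in X(\cc,D,T)$ corresponds to a pair $(\delta,\tau)$ with $\delta_\infty\in D$, $\tau\in B(0,T)\cap\h$, and $e^{i\tau(\log u)}\delta(u)=1$ for every unit $u\in\mathcal{O}_K^\times$. I would first fix $\delta$ and count the admissible $\tau$, then sum over $\delta$.

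For the inner sum, apply \eqref{compat} to a root of unity $\zeta\in\mu$: since $\log\zeta=0$ this forces $\delta(\zeta)=1$, a constraint that lives entirely on $\delta$. Applied to a fundamental system $u_1,\ldots,u_{r-1}$, it becomes the $(r-1)$-tuple of affine congruences $\tau(\log u_j)\in-\arg\delta(u_j)+2\pi\Z$. Since $\{\log u_j\}$ is a basis of $\a_0$ by Dirichlet's unit theorem, and since the regularity of $M_\h$ amounts to the restriction map $\h\to\a_0^*$ being a linear isomorphism, these constraints cut out a translate of $\mathcal{L}_\h=\Lambda_0^\perp\cap\h$ inside $\h$, which is non-empty whenever $\delta|_\mu=1$. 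A standard lattice point count in the expanding convex body $B(0,T)\cap\h$ then yields $\asymp\mathrm{vol}(B(0,T)\cap\h)/\mathrm{covol}(\mathcal{L}_\h)$ valid $\tau$'s, uniformly in the translate, as soon as $T$ exceeds a constant controlled by the diameter of a fundamental mesh of $\mathcal{L}_\h$.

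For the outer sum, fix $\delta_\infty\in D$: the set of $\delta_f\in\widehat{V(\cc)_f}$ such that the combined character is trivial on the diagonal image of $\mu$ is a coset of the annihilator of that image in $V(\cc)_f$, of cardinality $\asymp\phi(\cc)/|\mu|$; the coset is non-empty because $\mu\hookrightarrow V(\cc)$ is injective via its archimedean component. Multiplying the three counts — $h$ from the class group, $\asymp\phi(\cc)|D|/|\mu|$ from the admissible $\delta$, and $\asymp\mathrm{vol}(B(0,T)\cap\h)/\mathrm{covol}(\mathcal{L}_\h)$ from the $\tau$'s — and absorbing the factors $h$, $|\mu|$, $\mathrm{covol}(\mathcal{L}_\h)$, each depending only on $K$ and $\h$, into the implicit constants produces $|X(\cc,D,T)|\asymp V(\cc,D,T)$.

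The main technical subtlety I anticipate is the uniformity of the lattice point count across the varying cosets of $\mathcal{L}_\h$ and as $T$ grows. The required error term independent of the translate follows from a standard boundary-layer estimate bounding the number of mesh cells meeting $\partial B(0,T)$, and this is precisely what makes the hypothesis $T\gg 1$ necessary; in the opposite regime $T$ small the asymptotic can genuinely fail.
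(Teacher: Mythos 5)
Your proof is correct and follows essentially the same route as the paper's: parametrize via \eqref{iso1}, fiber over $\delta\in\widehat{V(\cc)}$ with prescribed $\delta_\infty\in D$, and count the compatible $\tau$ as lattice points in $B(0,T)\cap\h$ belonging to the shifted lattice $\mathcal{L}_\h(\delta)$. You are somewhat more careful than the paper in isolating the root-of-unity constraint $\delta|_\mu=1$ coming from \eqref{compat} applied to torsion units (the paper asserts $|X(\delta,T)|\asymp\mathrm{vol}(B(0,T)\cap\h)$ ``for all choices of $\delta$,'' which is strictly speaking false for the $\delta$ not trivial on $\mu$, though this affects only the constant by a factor $|\mu|$ depending only on $K$), but this refinement does not change the argument in any essential way.
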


In other words, Lemma \ref{lem1} states that for the families $X(\cc,D,T)$ the global obstruction to being trivial on $K^\times$ is analytically negligible.\footnote{In Lemma \ref{lem1}, and indeed throughout this entire paper, all implied constants are allowed to depend on the number field $K$ and the admissible hyperplane $\h$, which we consider as fixed.  We have suppressed this dependence on the notation for typographical simplicity.}

\begin{proof} For $\delta\in\widehat{V(\cc)}$ let $\Lambda_0^\perp(\delta)$ consist of all $\tau\in\a^*$ satisfying \eqref{compat}.  As in the discussion of \S\ref{discrete} we put $\mathcal{L}_\h(\delta)=\Lambda_0^\perp(\delta)\cap\h$; this is a shifted lattice in $\h$.  Recalling \eqref{iso1} we have
\begin{equation*}
|X(\cc,D,T)|=h |\{(\delta,\tau)\in\widehat{V(\cc)}\times\a^*: \delta_\infty\in D,\, \tau\in \mathcal{L}_\h(\delta)\cap B(0,T)\}|.
\end{equation*}
We estimate $|X(\cc,D,T)|$ by fibering over the set $D(\cc)=\{\delta \in \widehat{V(\cc)} \mid \delta_{\infty} \in  D\}$.  To this end, for $\delta\in \widehat{V(\cc)}$ we put $X(\delta,T)=\{\tau\in\a^*: \; \tau\in \mathcal{L}_\h(\delta)\cap B(0,T)\}$.   Thus
\begin{equation}\label{sum}
|X(\cc,\Omega,T)|=h\sum_{\delta\in D(\cc)} |X(\delta, T)|.
\end{equation}
An elementary lattice point argument (see e.g.\ \cite[Theorem 1.7]{Kr}) shows  that
\begin{equation*}
|X(\delta, T)| \sim c \cdot  {\rm vol}(B(0,T)\cap\h)\quad\text{as }\; T\rightarrow\infty,
\end{equation*}
for some constant $c > 0$ depending only on $K$ and $\h$.  In particular, since $\arg\delta(u)$ is confined to a bounded interval, if $T\gg 1$ is sufficiently large, then
\begin{equation*}
|X(\delta, T)| \asymp  {\rm vol}(B(0,T)\cap\h)
\end{equation*}
for all choices of $\delta$.  Inserting this into \eqref{sum} we find that for $T\gg 1$,
\begin{align*}
|X(\cc,\Omega,T)| &\asymp h|D(\cc)|{\rm vol}(B(0,T)\cap\h)\\
&=h\phi(\cc)|D|{\rm vol}(B(0,T)\cap\h)\asymp V(\cc,D,T),
\end{align*}
which proves the lemma.\end{proof}

\begin{rem} The ray class characters are precisely those characters $\chi=\prod_v\chi_v$ such that $\tau(\chi)=0$ and $\delta_v(\chi)=0$ for all complex $v$.  The group $X(\cc)$ of ray class characters of conductor dividing $\cc$ can be seen to fit into the above framework.  Indeed we can choose any admissible hyperplane $\h\subset\a^*$ and then the notation $X(\cc,\{0,1\}^{r_1},0)$ is nothing other than $X(\cc)$. One peculiarity of $X(\cc)$ is that it can dramatically fail to satisfy Lemma \ref{lem1}.  As discussed in Section \ref{inf ord}, it may well happen that even for large $\cc$ the set $X(\cc)$ consists only of the trivial character. This highlights the importance of the assumption $T\gg 1$ (recall that the implied constants depend on $\h$ and $K$). 

\end{rem}

\section{Rankin-Selberg $L$-functions}\label{RS}

Let $\pi\simeq\otimes_v\pi_v,\pi'\simeq\otimes\pi_v'$ be cuspidal automorphic representations of $\GL_n$ over $K$.  Most of the fundamental properties of the Rankin-Selberg $L$-function $L(s,\pi\times\pi')$ were established in the influential work \cite{JPSS}.  We briefly review some of these properties below.

\subsection{Local theory}\label{local}
For the definition of $L(s,\pi_v\times\pi_v')$ when $\pi_v$ and $\pi_v'$ are tempered, we refer the reader, for example, to the Appendix of \cite{RudSar}.  We now explain how to reduce the definition of the local Rankin-Selberg $L$-function for arbitrary pairs of generic unitary irreducible representations $\pi$ and $\pi'$ to that of tempered pairs.

For a general non-tempered representation, we have the following para\-metrization of $\pi_v$ as a Langlands quotient.  One associates with $\pi_v$ a standard parabolic subgroup $P$ of $\GL_n(K_v)$ of type $(n_1,\ldots , n_r)$ with unipotent radical $U$, irreducible tempered representations $\tau_j$ of $\GL_{n_j}(K_v)$, and real numbers $\sigma_j$ satisfying $1/2> \sigma_1>\cdots >\sigma_r> -1/2$.  These data satisfy the property that $\pi_v$ is equivalent to the fully induced representation 
\begin{equation*}
{\rm Ind}(\GL_n(K_v),P;{\boldsymbol\tau}[{\boldsymbol \sigma}]),
\end{equation*}
where ${\boldsymbol\tau}[{\boldsymbol \sigma}]$ is the representation of the group
\begin{equation*}
M=P/U\simeq \GL_{n_1}\times\cdots\times \GL_{n_r}
\end{equation*}
 given by
\begin{equation*}
{\boldsymbol\tau}[{\boldsymbol \sigma}]=\tau_1[\sigma_1]\otimes\cdots\otimes\tau_r[\sigma_r].
\end{equation*}
Here we have denoted by $\tau[\sigma]$ the twisted representation $g\mapsto \tau(g)|\det g|_v^\sigma$.  Note that by the unitarity of $\pi_v$ we have an equality $\{\tau_i[\sigma_i]\}=\{\widetilde{\tau_i}[-\sigma_i]\}$ as sets.  For the parametrization of $\pi'_v$ as a Langlands quotient, we simply add a prime $'$ to all objects in the above notation.

With the above conventions in place we can now write the local Rankin-Selberg $L$-function as a product
\begin{equation*}
L(s,\pi_v\times\pi'_v)=\prod_{i=1}^r\prod_{j=1}^{r'} L(s+\sigma_i+\sigma_j',\tau_i\times\tau'_j).
\end{equation*}
We single out the important property that $L(s,\tau_i\times\tau'_j)$ is holomorphic on $\Re(s)>0$.  From this (and the fact \cite{JS} that $0\leq |\sigma_i|,|\sigma'_j|< 1/2$) we deduce that $L(s,\pi_v\times\pi'_v)$ is holomorphic on $\Re(s)\geq 1$.  Let $m(\pi,v)=\max_j |\sigma_j|$ if $\pi$ is non-tempered, and $m(\pi,v)=0$ otherwise.  Then, in the case of contragredient pairs $\pi'_v\simeq\widetilde{\pi}_v$ the real number $2m(\pi,v)$ is precisely the right-most pole of $L(s,\pi_v\times\widetilde{\pi}_v)$.  This last observation will be used in the paragraph preceding Remark \ref{cond rem} below.

For finite places the local $L$-factor is of the form $P_\p(\mathcal{N}(\mathfrak{p})^{-s})$, where $P_\p$ is a polynomial of degree at most $n^2$ with $P_\p(0)=1$.  For infinite places there exist complex parameters $\mu_{\pi\times\pi'}(v,j)$, $j=1,\ldots ,n^2$, such that
\begin{equation*}
L (s,\pi_v\times\pi_v')=\prod_{j=1}^{n^2}\Gamma_v(s-\mu_{\pi\times\pi'}(v,j)),
\end{equation*}
where as usual we have written $\Gamma_\R(s)=\pi^{-s/2}\Gamma(s/2)$ and $\Gamma_\C(s)=2(2\pi)^{-s}\Gamma(s)$.  When $\pi'\simeq\widetilde{\pi}$ we have
\begin{equation}\label{mu-m}
2m(\pi,v)= \max_j \Re \mu_{\pi\times\widetilde{\pi}}(v,j).
\end{equation}

Recall that we have fixed once and for all the standard additive character $\psi_v$.  Having done so we may define the local $\gamma$-factor $\gamma (s,\pi_v\times\pi_v',\psi_v)$ which appears in the local functional equation
\begin{equation*}
L(s,\pi_v\times\pi'_v)=\gamma (s,\pi_v\times\pi_v',\psi_v)L(1-s,\widetilde{\pi}_v\times\widetilde{\pi}'_v).
\end{equation*}
One has
\begin{equation*}
\gamma (s,\pi_v\times\pi_v',\psi_v)=\epsilon (s,\pi_v\times\pi_v',\psi_v)L(1-s,\widetilde{\pi}_v\times\widetilde{\pi}'_v)/L(s,\pi_v\times\pi'_v)
\end{equation*}
for a canonically defined function $\epsilon (s,\pi_v\times\pi_v',\psi_v)$.

We now record several exact formulae for $\gamma (s,\pi_v\times\pi_v',\psi_v)$ when $\pi_v$ is twisted by a character.  Let $\p$ be a prime, unramified over $\Q$, and not dividing any of the finite places at which either $\pi$ or $\pi'$ ramifies.  Let $\chi_\p$ be a character of $K_\p^\times$ of conductor $\p^r$.  If $r=0$ then
\begin{equation}\label{unram chi gamma}
\gamma (s,\pi_\p\otimes\chi_\p\times\pi_\p',\psi_\p)=L(1-s,\widetilde{\pi}_\p\otimes\overline{\chi}_\p\times\widetilde{\pi}'_\p)/L(s,\pi_\p\otimes\chi_\p\times\pi'_\p),
\end{equation}
while if $r\geq 1$ we have
\begin{equation}\label{ram chi gamma}
\gamma (s,\pi_\p\otimes\chi_\p\times\pi_\p',\psi_\p)=\epsilon (s,\pi_\p\otimes\chi_\p\times\pi_\p',\psi_\p)=\mathcal{N}(\p^r)^{-n^2s}\mathcal{G}(\chi_\p)^{n^2},
\end{equation}
where $\mathcal{G}(\chi_\p)$ is the classical Gauss sum
\begin{equation*}
\mathcal{G}(\chi_\p)=\sum_{[u]\in U_\p/U_\p^{(r)}}\chi_\p(u)\overline{\psi_\p}(\varpi_\p^{-1} u).
\end{equation*}
For archimedean places $v\mid\infty$ and $\chi_v=\delta_v e^{i\tau_v}\in\widehat{K_v^\times}$ we have
\begin{align}\label{infinite gamma}
\gamma (s,\pi_v\otimes\chi_v\times\pi_v',\psi_v) =\epsilon (s,\pi_v\times & \pi_v',\psi_v)i^{|\delta_v|}\nonumber\\
&\times \prod_{j=1}^{n^2} \frac{\Gamma_v(1-s-i\tau_v - \mu_{\pi \times \widetilde{\pi}}(v, j) + |\delta_v|/\deg(v))}{\Gamma_v(s+i\tau_v - \bar{\mu}_{\pi \times \widetilde{\pi}}(v, j) + |\delta_v|/\deg(v))}.
\end{align}
This can be deduced, for example, from the explicit description of the local factors in \cite[Appendix]{RudSar}.

\subsection{Global theory} For $\Re(s)>1$ we define the completed Rankin-Selberg $L$-function to be the Euler product
\begin{equation*}
\Lambda(s,\pi\times\pi')=\prod_v L(s,\pi_v\times\pi'_v).
\end{equation*}
It admits a meromorphic continuation to a function of order 1 on the entire complex plane, and one has complete information on the location and order of its poles.  In fact, if $\pi'\not\simeq\widetilde{\pi}$ then $\Lambda(s,\pi\times\pi')$ is entire, and in the case of contragredient pairs, $\Lambda(s,\pi\times\widetilde{\pi})$ has simple poles at $s=1$ and $s=0$ and nowhere else. We call
\begin{equation*}
L(s,\pi\times\pi')=\prod_\p L(s,\pi_\p\times\pi'_\p)
\end{equation*}
the finite-part $L$-function, the product extending over all finite primes $\p$.  Since $\Lambda(s,\pi\times\pi')$ and the inverse of the Gamma function are functions of order 1, so too is $L(s,\pi\times\pi')$.

More generally, let $S$ be a finite set of places containing all infinite places as well as all places where $\pi$, $\pi'$, or the number field $K$ is ramified and put
\begin{equation*}
L^S(s,\pi\times\pi')=\prod_{\p\notin S}L(s,\pi_\p\times\pi_\p').
\end{equation*}
In the summation formula of Section \ref{sumform} we shall rely heavily on the (asymmetric) functional equation of $L^S(s,\pi\times\pi')$, which states that
\begin{equation}\label{fun'l eq}
L^S(s,\pi\times\pi')=\gamma_S(s,\pi\times\pi')L^S(1-s,\widetilde{\pi}\times\widetilde{\pi}'),
\end{equation}
where $\gamma_S(s,\pi\times\pi')=\prod_{v\in S}\gamma(s,\pi_v\times\pi_v',\psi_v)$.  Note that since $S$ contains all bad places the $\gamma_S$-factor is independent of the global character $\psi=\otimes_v\psi_v\in\widehat{\A/K}$.

Finally, let $\mathcal{C}(\pi\otimes\chi\times\widetilde{\pi})$ be the analytic conductor of the pair $(\pi\otimes\chi,\widetilde{\pi})$.  Computing explicit epsilon factors as in \cite{LRS1} one finds
\begin{equation}\label{RS cond}
\mathcal{C}(\pi\otimes\chi\times\widetilde{\pi})=\mathfrak{c}(\pi\times\widetilde{\pi})\mathfrak{c}(\chi)^{n^2}\mathcal{C}(\pi_\infty\otimes\chi_\infty\times\widetilde{\pi}_\infty)\ll_\pi \mathcal{C}(\chi)^{n^2}.
\end{equation}

\section{The Main Theorem}\label{nonvanish}

Let $||\tau||=\max_{v\mid\infty}\{|\tau_v|_v\}$ be the maximum norm on $\a^*$.  Thus for a parameter $T\geq 1$ we have
\begin{equation*}
B(0,T)=\{ \tau\in\h: |\tau_v|\leq T^{1/[K_v:\R]}\;\text{for every } v\}.
\end{equation*}
Let $\cc$ be any integral ideal of $\mathcal{O}_K$. Let $v_0 \mid\infty$ be an archimedean place.  Let $\h\subset\a^*$ be the admissible hyperplane defined by the linear condition $\tau_{v_0}=0$.  
Next put
\begin{equation*}
D=\{\delta_\infty\in\widehat{U}_\infty: \delta_{v_0}=0 \; \text{and}\; |\delta_v|\leq T^{1/2}\;\text{ for all complex } v\neq v_0\}.
\end{equation*}
Then using the definition \eqref{def of X} and the above input, the family $X(\cc,D,T)$ consists of all Hecke characters $\chi=\prod_v\chi_v$ of conductor dividing $\cc$, whose component $\chi_{v_0}$ is trivial, and whose archimedean parameters $\delta_v(\chi),\tau_v(\chi)$, for $v\neq v_0$, lie in the above boxes. In this case we have
\begin{equation}\label{V}
V(\cc,D,T)\asymp \phi(\cc)T^{r-1} .
\end{equation}
The following theorem is valid for the above choice of ramification data.

\bigskip

\begin{thm}\label{non-van thm} For $n \geq 2$ let $\pi$ be a unitary cuspidal automorphic representation of $\GL_n(\A)$.  Assume that $\cc$ is coprime to the finite places at which $\pi$ or $K$ is ramified and let $\nu$ denote the number of distinct prime ideal divisors of $\cc$. Let $\beta\in (1 - 2(n^2+1)^{-1},1)$.  Then for any $\varepsilon>0$ there exists $c=c(\varepsilon,K,\nu)>0$ such that for $T\geq c\mathcal{N}(\cc)^{\varepsilon}$ one has
\begin{equation}\label{sum over chi}
\sum_{\chi \in X(\cc,D, T)} |L(\beta, \pi\otimes\chi\times\widetilde{\pi})| \geq V(\cc,D,T)^{1-\varepsilon}. 
\end{equation}
\end{thm}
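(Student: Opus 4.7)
The plan is to approach $L(\beta, \pi \otimes \chi \times \widetilde{\pi})$ through its approximate functional equation. For a Schwartz test function $V$ with Mellin transform $\widetilde{V}$ satisfying $\widetilde{V}(0)=1$, shift the contour in
\[
\int_{(\sigma)} L^S(\beta + s, \pi \otimes \chi \times \widetilde{\pi})\,\widetilde{V}(s)\, Y^s\, \frac{ds}{2\pi i}
\]
across $s=0$, apply the functional equation \eqref{fun'l eq} on the new contour, and re-expand the shifted factor as a Dirichlet series. The outcome is the standard split
\[
L(\beta,\pi\otimes\chi\times\widetilde{\pi}) = \sum_{\a} \frac{a(\a)\chi(\a)}{\mathcal{N}(\a)^\beta} W_1\!\Big(\frac{\mathcal{N}(\a)}{Y}\Big) + \sum_{\a} \frac{a(\a)\overline{\chi}(\a)}{\mathcal{N}(\a)^{1-\beta}} W_2\bigl(\chi;\mathcal{N}(\a),Y\bigr),
\]
where $a(\a)=a_{\pi\times\widetilde{\pi}}(\a)\geq 0$ and $W_2$ is an explicit local transform built from \eqref{unram chi gamma}--\eqref{infinite gamma}. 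By \eqref{RS cond} the truncation $Y$ is taken near $(\mathcal{N}(\cc)\,T^{r-1})^{n^2/2}$ so as to balance the two sums.

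Next, sum this identity over $\chi\in X(\cc,D,T)$. The trivial inequality $\sum_\chi |L(\beta,\cdot)|\geq |\sum_\chi L(\beta,\cdot)|$ reduces matters to showing $\sum_\chi L(\beta,\cdot)\gg V(\cc,D,T)^{1-\varepsilon}$. The summation formula of Section \ref{sumform} evaluates each inner character sum $\sum_\chi \chi(\a)$: in view of the parametrization \eqref{iso1}, it is supported on principal ideals $\a=(\alpha)$ with $\alpha$ in a coset of units times $1+\cc$ whose archimedean logarithm lies in the dual lattice $\mathcal{L}_\h$ of Section \ref{discrete}. The dominant contribution is $\a=\mathcal{O}_K$, where $a(\mathcal{O}_K)=1$ and the character sum equals $|X(\cc,D,T)|\asymp V(\cc,D,T)$ by Lemma \ref{lem1}; choosing the test function with $W_1(0)\neq 0$ then furnishes the main term.

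The remaining contributions are of two types. First, the non-identity principal-ideal diagonal in the first Dirichlet sum: the admissibility of $\h$ forces $\mathcal{L}_\h$ to be a genuine shifted lattice, so any $\alpha\neq 1$ meeting the Arakelov constraint must have $\mathcal{N}((\alpha))$ bounded below by a positive power of $T$; together with the rapid decay of $W_1$ outside its support and the bound $a(\a)\ll \mathcal{N}(\a)^\varepsilon$, these terms are negligible once $T\geq c\,\mathcal{N}(\cc)^\varepsilon$, which simultaneously absorbs the $2^\nu$ losses from the ramified primes. Second, the dual sum: apply the local estimates of Section \ref{local section} --- archimedean stationary-phase bounds on the gamma quotients of \eqref{infinite gamma}, together with the closed-form evaluation of the Gauss sums in \eqref{ram chi gamma} --- to extract decay in both the archimedean parameters and the finite conductor. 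The numerical threshold $\beta>1-2/(n^2+1)$ is precisely what is required so that, given the conductor bound \eqref{RS cond} and the balancing point $Y\sim \mathcal{C}(\chi)^{n^2/2}$, the dual sum is strictly dominated by the main term.

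The main obstacle, in my estimation, is the archimedean orthogonality step: executing $\sum_\chi \chi_\infty$ over $\tau\in \mathcal{L}_\h\cap B(0,T)$ with controlled co-volume, and converting it by Poisson summation into a sum over $\log\mathcal{O}_K^\times$ against which the coefficients $a(\a)$ must be tested. The admissibility of $\h$, which guarantees via the invertibility of the matrix $M_\h$ in Section \ref{discrete} that $\mathcal{L}_\h$ is non-degenerate, is exactly what renders this step tractable; it is the analytic manifestation of the Arakelov regularization of units advertised in Section \ref{inf ord}, and it is what permits $\cc$ to be taken arbitrary.
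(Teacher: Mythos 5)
Your proposal is a genuinely different organization of the argument from the paper's, and in its stated form it contains a gap at the orthogonality step. Here is a comparison and the problem.

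The paper does not prove \eqref{sum over chi} by summing approximate functional equations and invoking orthogonality. Instead it works in the opposite direction: it fixes a factorizable test function $g=\otimes_v g_v$ on the ideles, forms the $K^\times$-average $G(x)=\sum_{\gamma\in K^\times}g(\gamma x)$ at a single well-chosen idele $x$ with $|x|_{\A}=Y<1$ small, and reads off the $L$-values not as a main term to be evaluated but as \emph{residues}. The crucial device is the deliberately singular weight $g_{v_0}(x)=|x|_{v_0}^{-\beta}g_0(|x|_{v_0})$ at one archimedean place: its Mellin transform has a simple pole at $s=\beta$, and shifting the contour in Proposition \ref{Voronoi} deposits $L^S(\beta,\pi\otimes\chi\times\widetilde\pi)$ into the residual sum $R$. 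Positivity of the Dirichlet coefficients $\lambda_{\pi\times\widetilde\pi}$ and of $g$ then gives the one-line lower bound $G(x)\geq g(x)\gg Y^{-\beta}$, with no diagonal/off-diagonal split and no character sum ever computed directly. The dual side $G^*(1/x)$ is controlled by the $S$-adic estimate of Corollary \ref{G star}, and the final balance $Y=V^{-(n^2+1)/2}$ produces the threshold $\beta>1-2/(n^2+1)$. The logic is therefore: $G(x)$ is large, every term in the Voronoi decomposition except the $\beta$-residues is small, hence the $\beta$-residues (a weighted sum of the $L(\beta,\cdot)$) must be large; the triangle inequality converts this into the lower bound on $\sum_\chi |L(\beta,\cdot)|$.

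Your plan to instead compute $\sum_\chi L(\beta,\cdot)$ asymptotically via AFE plus orthogonality misreads the role of Proposition \ref{Voronoi}: that formula is a Poisson-type identity for $G(x)$, not an evaluation of $\sum_{\chi\in X}\chi(\a)$, and your proposal leans on it for a purpose it does not serve. More substantively, $X(\cc,D,T)$ is not a group; the archimedean parameters range over $\mathcal{L}_\h(\delta)\cap B(0,T)$, which is a shifted lattice intersected with a \emph{sharp} box. A Weyl sum $\sum_{\tau\in \mathcal{L}_\h(\delta)\cap B(0,T)}e^{i\tau(\log\alpha)}$ over such a sharply truncated lattice decays only polynomially in $\|\log\alpha\|$, not rapidly, so the claim that nontrivial principal $\alpha$ must satisfy $\mathcal{N}((\alpha))\gg T^{\theta}$ does not by itself render the off-diagonals negligible; there are infinitely many $\alpha$ with bounded norm but slowly growing $\|\log\alpha\|$, and they are suppressed only to polynomial order. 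The paper evades exactly this difficulty by placing smooth bump functions $g_v(x)=g_0(T|x-1|_v)$ at the archimedean places $v\neq v_0$, so that the spectral weight $\widehat g_v$ is rapidly decaying and there is no sharp box to begin with. If you want to salvage your route you would have to replace the indicator of $B(0,T)\cap\h$ by a smooth majorant before Poisson-summing over $\mathcal{L}_\h$ -- at which point you will have essentially rediscovered the paper's choice of test functions, and the cleaner bookkeeping of the $G$/$G^*$ formulation.

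A smaller point: even granting orthogonality, proving the asymptotic $\sum_\chi L(\beta,\cdot)\gg V^{1-\varepsilon}$ requires controlling cancellation among the dual terms $W_2(\chi;\cdot)$ after averaging in $\chi$, which is where the real content of the local bounds of Section \ref{local section} lies; your proposal invokes these bounds but does not indicate how the $\chi$-average is to be executed on the dual side. In the paper this is entirely absorbed into Corollary \ref{G star} via the $S$-unit average of the local $g_v^*$. You have correctly identified the key structural ingredients -- positivity, the exponent $\beta>1-2/(n^2+1)$, the conductor bound \eqref{RS cond}, the role of the admissible hyperplane $\h$ and the invertibility of $M_\h$ -- but the mechanism by which they combine in the paper is different from, and sidesteps a genuine obstacle in, the one you propose.
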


\bigskip

In the special case of $K=\Q$ the above theorem in principle reproduces the non-vanishing theorem of Luo-Rudnick-Sarnak in \cite[Proposition 3.1]{LRS1}.  Note however that the non-vanishing theorem in \cite{LRS1} involves an additional summation over moduli in a dyadic interval.  The advantage of this extra summation is to give an asymptotic formula for the average of $L$-values, but for the strict application to Ramanujan this is unnecessary.  It suffices to have a lower bound, and this can obtained by the positivity of coefficients of the Rankin-Selberg $L$-function.  See the recent book by Bergeron \cite[Proposition 7.39]{Ber} where this simplification is carried out for the special case when $n=2$.\\

The argument that deduces Corollary \ref{rama} from Theorem \ref{non-van thm} follows now verbatim the argument of \cite{LRS2} at the beginning of their Section 2. From \eqref{mu-m} we need to show that the local factor at $v_0$ of $\Lambda(s, \pi \times \tilde{\pi})$ has no pole on the segment  $1 - (n^2+1)^{-1}<s<1$. More generally, if $\chi$ is any Hecke character trivial at $v_0$, we need to show that the local factor at $v_0$ of $\Lambda(s,\pi\otimes\chi\times\widetilde{\pi})$ has no pole on this segment. Since the global  Rankin-Selberg $L$-function $\Lambda(s,\pi\otimes\chi\times\widetilde{\pi})$ is holomorphic (except possibly for a pole at $s=0$ or $s=1$) and the archimedean factors never vanish, it suffices to show for every $\beta\in (1-(n^2+1)^{-1},1)$ there exists $\chi$ trivial at $v_0$ such that $L(\beta,\pi\otimes\chi\times\widetilde{\pi})\neq 0$; this in turn is guaranteed by Theorem \ref{non-van thm}.

\begin{rem}\label{cond rem}
From the definitions \eqref{p cond} and \eqref{v cond} we see that
\begin{equation*}
\mathcal{C}(\chi) \ll \mathcal{N}(\cc)T^{r-1}
\end{equation*}
for every $\chi \in X(\cc,D, T)$.  By Lemma \ref{lem1} and \eqref{V} we have (with an implied constant depending on the number of prime factors of $\cc$)
\begin{equation*}
\left|X(\cc,D, T)\right|\asymp \mathcal{N}(\cc) T^{r-1}
\end{equation*}
so that $\mathcal{C}(\chi) \ll \left|X(\cc,D, T)\right|$ for every $\chi \in X(\cc,D, T)$.  We see then that the twisting family employed in Theorem \ref{non-van thm} not only satisfies the estimates in Lemma \ref{lem1}, but in addition the conductors of its members are majorized by its size.  Note that this is true \emph{only} for the choice of hyperplane $\h$ given by setting one of the coordinates equal to $0$.  Indeed, keeping the above maximum norm on $\a^*$, if $\h$ is any other choice of hyperplane (including the ``usual" convention $\h_0$ given by $\sum_v t_v = 0$), then the analytic conductor of the characters $\chi$ in this family satisfy
\begin{equation*}
\mathcal{C}(\chi)\ll \mathcal{N}(\cc)T^r,
\end{equation*}
which is a factor of $T^{(r-1)/r}$ times larger than $|X(\cc,D,T)|$.  Thus the hyperplanes given by $\tau_{v_0}=0$, which can be thought of as Weyl chamber walls inside $\a^*$, give rise to the most analytically well-behaved families.

The interest in having such well-behaved families for an arbitrary modulus $\cc$ is not academic.  Indeed the absence of such families was the principal obstacle to extending to arbitrary number fields the best known bounds -- due to Kim-Sarnak \cite{KimSa}-- towards the Ramanujan conjecture for the group $\GL_2$.  Over general number fields all previous approaches had relied upon Rohrlich's construction of special moduli \cite{R}.  This problem was successfully resolved in \cite{BB}, although we did so in the most direct way, without recourse to non-vanishing results.
\end{rem}

\begin{rem}\label{c=1 rmk}
When $\cc$ is taken to be square-full, that is, $\p\mid\mathfrak{c}\Rightarrow\p^2\mid \mathfrak{c}$, the local estimates of Section \ref{local section} can be done elementarily; otherwise, one must use Deligne's bounds \cite{De} for hyper-Kloosterman sums to deduce Theorem \ref{non-van thm}.  As the choice is ours, we can opt for the former and make the implication to Theorem \ref{rama} independent of Deligne's bounds.
\end{rem}

\begin{rem}\label{some Serre} It is of some historical interest to note that if $K$ has at least two archimedean places, we can choose $\mathfrak{c} = \textbf{1}$ in the statement of Theorem \ref{non-van thm} in which case our twisting family consists of characters unramified at finite places.  Then only the archimedean estimates of Section \ref{local section} are needed for Theorem \ref{non-van thm}, and the proofs of these use nothing more than standard bounds for oscillatory integrals.  Hence when applied to fields of rank $r\geq 2$ and characters of modulus $\mathfrak{c} = \textbf{1}$, our own deduction of Corollary \ref{rama} from Theorem \ref{non-van thm} gives the first proof of the $1/2-1/(n^2+1)$ bounds at an  archimedean place using only archimedean ramification. This route towards Ramanujan was taken by Serre \cite{Se}, but he could only deduce the bounds $1/2 - 1/(dn^2 + 1)$ for a number field of degree $d$.  See Section \ref{local section} for a description of how our method resolves this problem.
\end{rem}

From knowledge of upper bounds on each individual term in the sum \eqref{sum over chi}, we can obtain information on the number of non-vanishing members (all are of course non-zero under the Generalized Riemann Hypothesis).  To see this, note that any bound on the local parameters of (fixed) polynomial strength, such as the Jacquet-Shalika bounds \cite{JS}, can be inserted pointwise into the Dirichlet series to deduce $L(1+\varepsilon,\pi\otimes\chi\times\widetilde{\pi})\ll_{\pi,\varepsilon} 1$, uniformly in $\chi$.  Combining this majorization with the bounds \eqref{RS cond}, the functional equation of the Rankin-Selberg $L$-function \eqref{fun'l eq}, and the Phragm\`en-Lindelof convexity principle (recall $L(s,\pi\times\pi')$ is of order 1), we find
\begin{equation*}
L(\beta,\pi\otimes\chi\times\widetilde{\pi})\ll_{\pi,\varepsilon} \mathcal{C}(\chi)^{\frac{n^2}{2}(1-\beta)+\varepsilon}.
\end{equation*}
Thus, for $\beta>1-1/(n^2+1)$, we have
\begin{equation*}
L(\beta,\pi\otimes\chi\times\widetilde{\pi})\ll_{\pi,\varepsilon} T^{\frac{n^2}{2}(1-\beta)+\varepsilon}\leq T^{1-(n^2+1)^{-1}+\varepsilon}.
\end{equation*}
This leads to the following quantitative  result.

\begin{cor}\label{first cor} Keep the notation and assumptions of Theorem \ref{non-van thm}.  Then
\begin{equation*}
|\{\chi\in X(\cc,D, T):  L(\beta, \pi\otimes\chi\times\widetilde{\pi}) \not= 0\}|
\gg_\varepsilon V(\cc,D, T)^{\frac{1}{n^2+1}-\varepsilon}
\end{equation*}  
for every $\varepsilon>0$. 
\end{cor}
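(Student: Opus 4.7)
The plan is to combine the lower bound for $\sum_\chi |L(\beta,\pi\otimes\chi\times\widetilde{\pi})|$ provided by Theorem \ref{non-van thm} with the pointwise upper bound established in the paragraph preceding the corollary. Let $N = |\{\chi \in X(\cc,D, T):  L(\beta, \pi\otimes\chi\times\widetilde{\pi}) \not= 0\}|$ be the quantity we wish to lower bound. Only the non-vanishing terms contribute to the sum \eqref{sum over chi}, so
\begin{equation*}
V(\cc,D,T)^{1-\varepsilon} \;\leq\; \sum_{\chi \in X(\cc,D, T)} |L(\beta, \pi\otimes\chi\times\widetilde{\pi})| \;\leq\; N \cdot \max_{\chi \in X(\cc,D, T)} |L(\beta,\pi\otimes\chi\times\widetilde{\pi})|.
\end{equation*}

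For the maximum, I would apply the convexity bound $L(\beta,\pi\otimes\chi\times\widetilde{\pi}) \ll_{\pi,\varepsilon} \mathcal{C}(\chi)^{\frac{n^2}{2}(1-\beta)+\varepsilon}$ derived just above from the Jacquet-Shalika bounds, the conductor estimate \eqref{RS cond}, the functional equation \eqref{fun'l eq}, and Phragm\'en-Lindel\"of, and then invoke Remark \ref{cond rem}, which tells us that $\mathcal{C}(\chi) \ll |X(\cc,D, T)| \asymp V(\cc,D,T)$ for our specific choice of hyperplane $\tau_{v_0} = 0$. This yields
\begin{equation*}
L(\beta,\pi\otimes\chi\times\widetilde{\pi}) \ll_{\pi,\varepsilon} V(\cc,D,T)^{\frac{n^2}{2}(1-\beta)+\varepsilon}
\end{equation*}
uniformly in $\chi \in X(\cc,D, T)$, and therefore $N \gg_{\pi,\varepsilon} V(\cc,D,T)^{\,1-\frac{n^2}{2}(1-\beta)-2\varepsilon}$.

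It then remains to optimize in $\beta$. Given a target $\varepsilon > 0$, I would choose $\beta \in (1 - 2(n^2+1)^{-1}, 1)$ close enough to the left endpoint that $1 - \frac{n^2}{2}(1-\beta) > (n^2+1)^{-1} - \varepsilon/2$; this is possible because at $\beta = 1 - 2(n^2+1)^{-1}$ the quantity $1 - \frac{n^2}{2}(1-\beta)$ equals exactly $(n^2+1)^{-1}$. With this $\beta$, the bound above delivers the required conclusion $N \gg_\varepsilon V(\cc,D,T)^{1/(n^2+1) - \varepsilon}$, provided $T \geq c\, \mathcal{N}(\cc)^\varepsilon$ so that Theorem \ref{non-van thm} applies.

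There is no genuine obstacle here; the proof is essentially a one-line division of Theorem \ref{non-van thm} by a convexity bound. The key conceptual feature, and the reason the clean exponent $1/(n^2+1)$ arises, is precisely the equilibrium $\mathcal{C}(\chi) \ll |X(\cc,D, T)|$ highlighted in Remark \ref{cond rem}: it is only because the hyperplane $\tau_{v_0}=0$ makes the analytic conductor comparable to the family size that the pointwise convexity bound and the first-moment lower bound of Theorem \ref{non-van thm} combine without losing a power of $T$. With any other admissible hyperplane, the ratio $\mathcal{C}(\chi)/|X(\cc,D, T)|$ would be inflated by a factor $T^{(r-1)/r}$, and the corresponding exponent in the corollary would be correspondingly diminished.
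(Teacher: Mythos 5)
Your proof is correct and follows exactly the route the paper has in mind: divide the first-moment lower bound of Theorem~\ref{non-van thm} by the pointwise convexity bound $L(\beta,\pi\otimes\chi\times\widetilde{\pi})\ll_{\pi,\varepsilon}\mathcal{C}(\chi)^{\frac{n^2}{2}(1-\beta)+\varepsilon}$, using the equilibrium $\mathcal{C}(\chi)\ll V(\cc,D,T)$ from Remark~\ref{cond rem}. Your closing remark about why the hyperplane $\tau_{v_0}=0$ is essential is exactly the right conceptual point.

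One small slip in the final step: there is no optimization over $\beta$ to be done. The corollary's hypothesis ``keep the notation and assumptions of Theorem~\ref{non-van thm}'' means $\beta$ is \emph{given}, not chosen, so the argument must deliver the stated bound for whatever $\beta\in(1-2(n^2+1)^{-1},1)$ one is handed. Fortunately the inequality you need is automatic: for every $\beta$ in this open interval one has $1-\frac{n^2}{2}(1-\beta)>\frac{1}{n^2+1}$, with equality only at the excluded left endpoint, so $N\gg V^{1-\frac{n^2}{2}(1-\beta)-2\varepsilon}\geq V^{\frac{1}{n^2+1}-2\varepsilon}$ directly. In fact your phrasing (``choose $\beta$ close to the left endpoint'') picks out the \emph{worst}, not the best, value of the exponent --- as $\beta\to 1$ the bound only improves. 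This does not affect the validity of the proof, but the optimization paragraph should be replaced by the observation that the exponent is uniformly at least $\frac{1}{n^2+1}$ over the allowed range of $\beta$.
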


Corollary \ref{first cor} yields nowhere near a positive proportion of non-vanishing $L$-values.  On the other hand, it is valid for arbitrary number fields and certain $L$-functions of possibly very large degree.\\

It would be interesting to study this and related families further.  For instance, one could ask about the non-vanishing at the central point of the Hecke $L$-functions associated to $\chi\in X(\cc,D,T)$.  The results of \cite{Mi} should suffice to compute first and second moments over this family and to conclude  that $L(1/2, \chi) \not=0$ for $\gg V(\cc,D,T)^{1-\varepsilon}$ characters $\chi \in X(\cc,D,T)$. We leave this for future investigation. 
  
\section{A summation formula}\label{sumform}

The goal of this section is to establish a summation formula that lies at the heart of our proof of Theorem \ref{non-van thm}.  One may give various names to this formula, such as Voronoi summation or approximate functional equation for Rankin-Selberg $L$-functions.  In any case, the formula takes as input factorizable functions $g=\otimes_v g_v$ on the ideles $\I$.  The summation formula will then output a relation between two smoothened sums of Dirichlet series coefficients of the Rankin-Selberg $L$-function and a smooth average of this same $L$-function twisted by Hecke characters, evaluated at the point $s=\beta$.

Let $B_{\pi,K}$ the set of finite places where $\pi$ or $K$ is ramified. In what follows we fix an integral ideal $\cc$ of $\mathcal{O}_K$, written $\cc=\prod \p^{e_\p}$, assumed to be prime to all places in $B_{\pi,K}$.   We  put $S=\{\p\mid\cc\}\cup B_{\pi,K} \cup \infty$ and fix  $v_0\mid\infty$.

\subsection{Defining $g$}\label{g}
We now define functions $g_v$ at each place. More general assumptions are ceratinly possible, but the following definition suffices for our application. 

\subsubsection{Test functions}
At all primes places $v \in S \setminus \{v_0\}$ we choose a test function $g_v\in C_c^\infty(K_v^\times)$.  Hence for archimedean $v\not= v_0$, the transform $\widehat{g}_v(\sigma,\chi_v)=\widehat{g}_v(\sigma,e^{i\tau_v}\delta_v)=\widehat{g}_v(s, \delta_v)$ is entire in the complex variable $s=\sigma+i\tau_v$ and decays rapidly in vertical strips.  Moreover, if $v = \Bbb{C}$, then $\widehat{g}_v(s, \delta_v)$ is also rapidly decaying in $\delta_v \in \Bbb{Z}$.   At finite primes in $S$ we impose a ramification condition:  we require $g_\p$ to be invariant under $U_\p^{(e_\p)}$ for $\p\mid\cc$ and under $U_\p^{(1)}$ for $\p \in B_{\pi,K}$.
 
\subsubsection{Function at $v=v_0$}\label{v zero}

At $v_0$ we let $g_{v_0}(x) = |x|_{v_0}^{-\beta} g_0(|x|_{v_0})$, where $g_0\in C_c^\infty(\R_{\geq 0})$ is a fixed non-negative smooth function with support in $[0, 1]$, satisfying $g_0(0) = 1$, and whose (right-) derivatives at $0$ vanish to all orders.   

Since $g_{v_0}$ is invariant under $U_v$, the transform $\widehat{g}_{v_0}(s,\delta_{v_0})$ is nonzero only for $\delta_{v_0}=0$ corresponding to the trivial character.  Moreover $\widehat{g}_{v_0}(s,0)$ is holomorphic in $s$ except for a simple pole at $s=\beta$ and decays rapidly in vertical strips away from the pole $\beta$.

\subsubsection{Coefficient function}\label{coeff-function}

Finally, for all $\p\notin S$ we define $g_\p$ in such a way as to recover the Dirichlet series coefficients $\lambda_{\pi\times\widetilde\pi}(\p^r)$ when these latter are thought of as $U_\p$-invariant functions on $K_\p^\times$.  To this end, for every $\p\notin S$ we put 
\begin{equation}\label{coeff fun}
g_\p(x)=\begin{cases} \lambda_{\pi\times\widetilde\pi}(\p^r),& v_\p(x)=r\geq 0,\\
					     0, & v_\p(x)<0.
			\end{cases}
\end{equation}
We colloquially refer to this choice of $g_\p$ as the coefficient function. An easy calculation shows that
\begin{equation*}
\widehat{g}_\p(s,\delta_\p)=\begin{cases} L(s,\pi_\p\times\widetilde{\pi}_\p),& \text{if }\, \delta_\p=1,\\
							   0, & \text{else}.
				     \end{cases}
\end{equation*}

\medskip

For all but finitely many primes $\p$ (namely, for all $\p\notin S$) $g_\p$ is clearly invariant under $U_\p$.  It then makes sense to consider
\begin{equation*}
g_S= \prod_{v \in S} g_v\quad\text{and}\quad g= \underset{\p\notin S}{\otimes} g_\p \times g_S.
\end{equation*}

\subsection{Defining $g^*$}

Similarly to the above we shall now define a function $g_v^*$ at each place, a sort of transform of $g_v$ defined via the $\gamma$-factor appearing in the local functional equation of $L(s,\pi_v\times\widetilde{\pi}_v)$.

For a place $v$ of $K$ and  $g_v\in C^\infty(K_v^\times)$ as defined above we put
\begin{equation}\label{defghatstar}
g_v^*(x)=\int_{\widehat{K_v^\times}} \widehat{g}_v(1-\sigma,\chi_v^{-1}) \gamma(1-\sigma, \pi_v\otimes\chi_v\times\widetilde{\pi}_v,\psi_v) \chi_v^{-1}(x)|x|_v^{-\sigma} d\chi_v
\end{equation}
where $\sigma>1$.  The integral is absolutely convergent. By Mellin inversion
\begin{equation*}
\widehat{g_v^*}(\sigma,\chi_v)=\widehat{g_v}(1-\sigma,\chi_v^{-1}) \gamma(1-\sigma, \pi_v\otimes\chi_v\times\widetilde{\pi}_v,\psi_v).
\end{equation*}
If $\p\notin S$ and $g_\p$ is defined as in \eqref{coeff fun}, then  $g_\p^*$ is $U_\p$-invariant (in fact, $g^*_\p = g_\p$).  From this we deduce that with the choice of $g=\otimes_v g_v$ as in \S\ref{g}, it makes sense to consider
\begin{equation*}
g_S^*= \prod_{v \in S} g_v^*\quad\text{and}\quad g^*= \underset{\p\notin S}{\otimes}g_\p^*\times g_S^*.
\end{equation*}

\subsection{Summation formula}

We let
\begin{equation}\label{capital Gs}
G_S(x) = \sum_{u \in \mathcal{O}_S^{\times}} g_S(ux)\quad\text{and}\quad G(x)= \sum_{\gamma \in K^{\times}} g(\gamma x),
\end{equation}
obtaining well-defined functions on $\mathcal{O}_S^{\times}\backslash K_S^\times$ and $\mathscr{C}$, respectively.  In the same way one defines $G_S^*(x)$ and $G^*(x)$ as sums over $S$-units and non-zero field elements. Almost verbatim as in \cite[Lemma 4, (2.3)]{BB} one sees that the the sums are absolutely convergent and that the Mellin inversion formula holds for $G$ and $G^*$; in fact we have 
\begin{displaymath}
G(x) = \int_{\widehat{\mathscr{C}}} \widehat{g}(\sigma, \chi)\chi^{-1}(x)|x|_\A^{-\sigma} d\chi,\qquad  G^*(x)=\int_{\widehat{\mathscr{C}}}\widehat{g^*}(\sigma, \chi)  \chi^{-1}(x)|x|_\A^{-\sigma} d\chi
 \end{displaymath}
for any $\sigma>1$.  Moreover, when $x_v=1$ for all $v\notin S$, the sums $G(x)$ and $G^*(x)$ may be written as smooth sums of Dirichlet coefficients.  Indeed, it is not hard to see that
\begin{equation}\label{nothard1}
  G(x) = \sum_{ \mathfrak{a} = (\alpha) \in P_K(S)} \lambda_{\pi \times \widetilde{\pi}}(\mathfrak{a}) G_S(\alpha x_S)
\end{equation}
and
\begin{equation}\label{nothard2}
G^{\ast}(1/x) = \sum_{\mathfrak{a} = (\alpha) \in P_K(S)} \lambda_{\pi \times \tilde{\pi}}(\mathfrak{a}) G_S^{\ast}(\alpha x_S^{-1}).
\end{equation}

\begin{prop}\label{Voronoi} We have
\begin{equation*}
G(x)=|x|_\A^{-1}R+ |x|_\A^{-1} G^{\ast}(1/x),
\end{equation*}
where
\begin{equation*}
R=c_K\sum_{\omega \in \widehat{\mathscr{C}^1}(\mathfrak{c})} \bar{\omega}(x)\sum_\rho \underset{s=\rho}{\rm Res}\; L^S(s, \pi\otimes\omega\times\widetilde{\pi}) |x|_{\Bbb{A}}^{1-s} \widehat{g}_S(s,\omega),
\end{equation*}
the sum over $\rho$ running over all poles of the integrand in $\Re (s)\in [0,1]$. Possible poles can occur only at $s=0$, $s=1$ (if $\omega$ is trivial), or $\Re s = \beta$, and the infinite sum over $\rho$ is absolutely convergent. 
\end{prop}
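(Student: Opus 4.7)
The approach is to combine the Mellin inversion formulas for $G$ and $G^*$ recorded just before the statement, using the global functional equation \eqref{fun'l eq} as the bridge: first shift the $s$-contour of the Mellin integral representing $G(x)$ past its poles (which produce the residue term $R$), and on the shifted contour apply the functional equation to recognize the integral as the one defining $G^*(1/x)$.

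Starting from
$$G(x) = \int_{\widehat{\mathscr{C}}} \widehat{g}(s,\chi)\,\chi^{-1}(x)\,|x|_\A^{-s}\,d\chi, \qquad \Re s = \sigma > 1,$$
the coefficient-function computation of Section \ref{coeff-function} yields the Euler-type factorization
$$\widehat{g}(s,\chi) = \widehat{g}_S(s,\chi_S)\,L^S(s,\pi \otimes \chi \times \widetilde\pi),$$
the support conditions on $\widehat{g}_S$ simultaneously forcing $\chi$ to be unramified at every $\p \notin S$. Unfolding the decomposition $\widehat{\mathscr{C}} \simeq \widehat{\mathscr{C}^1} \times \R$ with the Haar measure normalization of Section \ref{notation} rewrites the idelic integral as a sum over $\omega \in \widehat{\mathscr{C}^1}(\mathfrak{c})$ of contour integrals in the single complex variable $s$.

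I would then shift the contour from $\Re s = \sigma$ down to $\Re s = 1-\sigma$. The integrand is holomorphic in the strip except at simple poles of $L^S(s, \pi \otimes \omega \times \widetilde\pi)$ at $s \in \{0,1\}$ (which contribute only when $\omega$ is trivial, by cuspidality of $\pi$) and at the simple pole of $\widehat{g}_{v_0}(s, 0)$ at $s = \beta$ noted in Section \ref{v zero}. Writing $|x|_\A^{-s} = |x|_\A^{-1} |x|_\A^{1-s}$, the sum of residues is exactly the term $|x|_\A^{-1} R$ in the statement. On the shifted line, \eqref{fun'l eq} converts $L^S(s, \pi \otimes \omega \times \widetilde\pi)$ to $\gamma_S(s, \pi \otimes \omega \times \widetilde\pi) L^S(1-s, \widetilde\pi \otimes \overline\omega \times \pi)$; substituting $s \mapsto 1-s$, together with $\omega \mapsto \omega^{-1}$ in the outer sum, brings us back to a contour of real part $\sigma$. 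The local functional equations \eqref{unram chi gamma} at primes $\p \notin S$ absorb the remaining factors into the Dirichlet series, and the resulting integrand is identified with $\widehat{g^*}(s, \chi)\chi(x)|x|_\A^{s}$ via the definition \eqref{defghatstar}. The Mellin inversion formula for $G^*$ from the excerpt then yields $|x|_\A^{-1} G^*(1/x)$.

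The principal technical obstacle is the bookkeeping in the final step, in which one must reconcile the partial factor $\gamma_S$ coming from the global functional equation with the all-places product of local $\gamma_v$ entering the definition of $g^*$; this is handled by the unramified local functional equations \eqref{unram chi gamma}, whose left-hand side expresses precisely the ratio of Euler factors needed. Justifying the contour shift itself is routine: it follows from the rapid decay of $\widehat{g}_S(s, \omega)$ in vertical strips (a consequence of smoothness and compact support at archimedean places, entirety at finite ones), combined with a Phragm\'en--Lindel\"of convexity estimate for $L^S$, which together ensure absolute convergence of the residue sum over $\omega \in \widehat{\mathscr{C}^1}(\mathfrak{c})$.
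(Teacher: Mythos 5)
Your proposal follows essentially the same route as the paper's proof: Mellin inversion combined with the factorization $\widehat{g}(s,\omega)=\widehat{g}_S(s,\omega)\,L^S(s,\pi\otimes\omega\times\widetilde\pi)$, a contour shift past the poles of $L^S$ at $s=0,1$ and of $\widehat{g}_{v_0}$ on $\Re s=\beta$ (more precisely at $s=\beta-i\tau_0$, where $(i\tau_0,0)$ is the $v_0$-component of $\omega$), and then the functional equation with the change of variables $(s,\omega)\mapsto(1-s,\bar\omega)$ to identify the remaining integral as $|x|_\A^{-1}G^{\ast}(1/x)$. The extra remarks on reconciling $\gamma_S$ with the all-places product defining $g^*$ via \eqref{unram chi gamma}, and on using Phragm\'en--Lindel\"of to justify the shift, are sound technical glosses on what the paper leaves implicit.
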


\begin{proof} 
By Mellin inversion and \eqref{decomp}  we have
\begin{displaymath}
\begin{split}
  G(x) & =  c_K\sum_{\omega \in \widehat{\mathscr{C}^1}}\bar{\omega}(x)\int_{(2)}\widehat{g}(s, \omega)|x|_{\Bbb{A}}^{-s} \frac{ds}{2\pi i}\\
  & =   c_K\sum_{\omega \in \widehat{\mathscr{C}^1(\cc)}}\bar{\omega}(x)\int_{(2)}L^S(s, \pi\otimes\omega\times\widetilde{\pi}) |x|_{\Bbb{A}}^{-s} \widehat{g}_S(s, \omega) \frac{ds}{2\pi i}.
  \end{split} 
\end{displaymath}
For each $\omega$ we shift the contour to $\Re s  = -1$. This is admissible by the rapid decay of the infinite components of $\widehat{g}(s, \omega)$ along vertical lines.  In doing so, pick up the the residue of the integrand coming from the pole at $s=1$ (and possibly at $s=0$) of $L^S(s, \pi\times\widetilde{\pi})$ as well as those coming from the function $\widehat{g}_{v_0}(s)$ at $\Re s = \beta$. We obtain
\begin{equation}\label{shifty}
G(x)=|x|_\A^{-1}R+ c_K\sum_{\omega \in \widehat{\mathscr{C}^1(\cc)}}\bar{\omega}(x)\int_{(-1)}L^S(s, \pi\otimes\omega\times\widetilde{\pi}) |x|_{\Bbb{A}}^{-s} \widehat{g}_S(s, \omega) \frac{ds}{2\pi i}.
\end{equation}
By the rapid decay of $\widehat{g}_S(s, \omega)$, the infinite sum $R$ converges absolutely. 
We apply the functional equation and change variables $(s, \omega) \mapsto (1-s, \bar{\omega})$. An application of the inverse Mellin transform shows that the remaining integral is precisely $|x|_\A^{-1}G^*(1/x)$.
\end{proof}

\section{Local and $S$-adic estimates}\label{local section}

This is a technical section that establishes certain estimates that we shall need in Section \ref{non-van section} when we prove Theorem \ref{non-van thm}.  Proposition \ref{local lemma} estimates (and in certain cases evaluates) the transforms $g_v^*$ for explicit choices of $g_v$ to be prescribed below.  These transforms are oscillatory integrals, and we bring several tools to bear to examine their size.  One of these is stationary phase, which in a sense replaces Landau's lemma \cite{La}.  Another is Deligne's theorem, which we need only when the modulus $\cc$ is not square-full.  We then go on to average these estimates over $S$-units, obtaining Corollary \ref{G star}, with which we end the section.

The archimedean estimates in this section describe how the loss in the degree of the number field observed by Serre (cf. Remark \ref{some Serre}) can be repaired.  An illustrative example is that of an imaginary quadratic field, which is treated in the $v=\C$ computation below.  Here, rather than using a a test function $g_v$ supported on annuli, we take a function supported in a small ball around 1 and then dilate it, thereby approximating an annular sector.  The resulting transform $g_v^*$ then involves an additional summation over the character group of the circle.  We convert this $\Z$-sum by Posson summation into a dual $\Z$-sum which is essentially supported on the first term.  As a result, a stationary phase argument in 1 dimension is replaced by one of 2 dimensions, gaining back the loss by a factor of 2.  This observation extends to all number fields by factorization of test functions.

\subsection{Local estimates}

As in the beginning of Section \ref{sumform}, let $\cc$ be an integral ideal of $\mathcal{O}_K$, written $\cc = \prod \p^{e_{\p}}$, and put $S=\{\p\mid \cc\}\cup B_{\pi,K} \cup\infty$. Let $T \geq 1$ be a parameter.   For $v \in S \setminus \{v_0\}$ we define $g_v$ as follows: 
\begin{itemize}
\item for $\p \mid\cc$ let $g_{\p}$ be the characteristic function on $U_\p^{(e_\p)}$;
\item for $\p \in B_{\pi,K}$ let $g_{\p}$ be the characteristic function on $U_{\p}^{(1)}$;
\item for archimedean $v \not=v_0$ let $g_v(x) = g_0(T|x-1|_v)$.  
\end{itemize} 
For  $v \not\in S \setminus \{v_0\}$ we define $g_v$ as in subsections \ref{v zero} and \ref{coeff-function}. 
The function $g_v$ is $U_v$-invariant for every $v$. Except at the place $v_0$ the function $g_v$ is of compact support. 

We begin by evaluating or estimating the Fourier transforms $\widehat{g_v}$ of the above defined functions.  It is easy to see that for $\p\mid\cc$
\begin{equation}\label{hatgfin}
  \widehat{g}_{\p} = \phi(\p^{e_{\p}})^{-1} \textbf{1}_{\text{deg}(\chi) \leq e_{\p}}.
\end{equation}
Moreover, $\widehat{g}_{v_0}(s,\delta)$ is nonzero only for $\delta=0$, holomorphic in $s$ except for a simple pole  at $s=\beta$, and satisfies the bound
\begin{equation}\label{v0decay}
  \widehat{g}_{v_0}(\sigma + i\tau, 0) \ll_{\sigma, A} (1+|\tau|)^{-A}, \quad |s-\beta| \geq 1/100
\end{equation}
for any $A \geq 0$. For archimedean $v\not= v_0$, the transform $\widehat{g_v}(s, \delta)$ is entire in $s$ and satisfies the bound
\begin{equation}\label{g*inf}
  \widehat{g_v}(\sigma + i\tau, \delta) \ll_{\sigma, A} \frac{1}{T}\left(1 + \frac{|\delta+i\tau|^{[K_v:\R]}}{T}\right)^{-A}
\end{equation}
for any $A \geq 0$. We now estimate the transforms $g^*_v$.

\begin{prop}\label{local lemma}  For any $\varepsilon > 0$, $A \geq 0$ the following bounds hold:
\begin{equation}\label{finite}
  g_{\p}^{\ast}(x) \ll  \frac{1}{\phi(\p^{e_\p})|x|_\p } \cdot \begin{cases} (1+  |x|_\p^{\frac{1}{2} + \frac{1}{2n^2} }), & |x|_\p \leq \mathcal{N}(\p)^{e_\p n^2},\\
  0, & |x|_\p > \mathcal{N}(\p)^{e_\p n^2},
\end{cases}
\end{equation}
for $\p\mid \cc$;
\begin{equation}\label{infinite}
  g_v^{\ast}(x) \ll_{A, \varepsilon}  \frac{1}{T|x|_v }\left(1 + |x|_v^{\frac{1}{2} + \frac{1}{2n^2}+\varepsilon}\right)    \left(1+ \frac{|x|_v}{T^{n^2+\varepsilon}}\right)^{-A}
  \end{equation}
for archimedean $v\not= v_0$;  and 
\begin{equation}\label{v0}
  g_{v}^{\ast}(x) \ll_{A,   v}  |x|_{v}^{-1  } (1+ |x|_{v})^{-A}
  \end{equation}
 if $v=v_0$ and if $v=\mathfrak{p} \in B_{\pi,K}$  is a finite ramified prime. 
\end{prop}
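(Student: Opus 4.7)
The plan is to substitute the explicit $\gamma$-factor formulas \eqref{unram chi gamma}--\eqref{infinite gamma} into the definition \eqref{defghatstar} and to analyze the resulting oscillatory integrals separately in each of the three cases, matching the amplitude of the $\gamma$-factor against the support and size bounds \eqref{hatgfin}--\eqref{g*inf} of the Mellin transforms $\widehat{g}_v$.

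For finite primes $\p \mid \cc$, the transform \eqref{hatgfin} restricts the sum over characters $\chi_\p$ to those of conductor $\p^r$ with $r \leq e_\p$. Substituting \eqref{ram chi gamma} and carrying out the continuous $|\cdot|^{it}$-integration reduces by orthogonality to a delta function pinning $|x|_\p = \mathcal{N}(\p)^{rn^2}$, which already yields the support condition in \eqref{finite}. What remains is a finite character sum of shape
\begin{equation*}
g_{\p}^{\ast}(x) = \frac{\mathcal{N}(\p^r)^{-n^2}}{\phi(\p^{e_\p})} \sum_{\mathrm{cond}(\eta) = \p^r} \mathcal{G}(\eta)^{n^2} \eta^{-1}(x),
\end{equation*}
plus an unramified $r = 0$ contribution handled directly via \eqref{unram chi gamma} and Stirling. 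Expanding each $\mathcal{G}(\eta)^{n^2}$ as a sum over $n^2$ units and executing the orthogonality relation in $\eta$ converts the inner sum into a hyper-Kloosterman sum $\mathrm{Kl}_{n^2}(x;\p^r)$, which Deligne's theorem \cite{De} bounds by $\ll \mathcal{N}(\p^r)^{(n^2-1)/2}$. Rewriting in terms of $|x|_\p = \mathcal{N}(\p)^{rn^2}$ produces the exponent $1/2 + 1/(2n^2)$ of \eqref{finite}. For square-full $\cc$ every contributing $r$ satisfies $r \geq 2$, and an elementary $\p$-adic stationary phase calculation recovers the same bound without Deligne, as noted in Remark \ref{c=1 rmk}.

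For archimedean $v \neq v_0$, the test function $g_v(x) = g_0(T|x - 1|_v)$ concentrates mass $\asymp 1/T$ near $x = 1$, so by \eqref{g*inf} the factor $\widehat{g}_v(1-s, \chi_v^{-1})$ is essentially supported on characters with $|\delta_v + i\tau_v|^{[K_v:\R]} \lesssim T^{1+\varepsilon}$, with amplitude $\asymp 1/T$ there. Inserting \eqref{infinite gamma} and applying Stirling to the ratio of Gamma functions presents $g_v^*(x)$ as an oscillatory integral in the character parameter $(\delta_v, \tau_v)$ with amplitude of size $|\tau_v|^{[K_v:\R] n^2(\sigma - 1/2)}$ and phase essentially $n^2 \tau_v \log|\tau_v| - \tau_v \log|x|_v$, plus an analogous $\delta_v$-contribution in the complex case. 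A stationary phase analysis locates the critical point at $|\tau_v| \sim |x|_v^{1/n^2}$; when this lies within the effective support, the standard $|\tau_v|^{-1/2}$ gain combined with the amplitude produces the exponent $1/2 + 1/(2n^2)$ of \eqref{infinite}, while for $|x|_v \gg T^{n^2 + \varepsilon}$ the critical point escapes the support and repeated integration by parts delivers the rapid decay factor. For $v = \C$ the decisive step is to execute stationary phase jointly in the two-dimensional parameter $(\tau_v, \delta_v)$: the extra dimension contributes a second factor of $|\tau_v|^{-1/2}$ that precisely compensates for the doubled exponent inherent in $|x|_\C = |z|^2$ and undoes the field-degree loss in Serre's approach noted in Remark \ref{some Serre}.

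At $v = v_0$ the function $g_{v_0}$ is $U_{v_0}$-invariant so only the trivial character contributes to Mellin inversion; the rapid vertical decay of $\widehat{g}_{v_0}(1-s, 0)$ from \eqref{v0decay} combined with Stirling's estimate on the archimedean $\gamma$-factor and a contour shift to the right gives \eqref{v0} at once. At $\p \in B_{\pi,K}$ the function $g_\p$ is the indicator of $U_\p^{(1)}$, so $\widehat{g}_\p$ is supported on finitely many depth-one characters and the $\gamma$-factor depends only on the local representation; a standard analysis of the resulting finite sum of integrals yields \eqref{v0}. The principal technical obstacle throughout is the multidimensional stationary phase at complex archimedean places: a merely one-dimensional analysis in $\tau_v$ would forfeit the extra $|\tau_v|^{-1/2}$ and reintroduce precisely the degree-of-field defect that the particular choice of $g_v$ and admissible hyperplane $\h$ are designed to overcome.
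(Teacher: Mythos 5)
Your proposal follows the paper's argument essentially verbatim: insert the explicit $\gamma$-factors into \eqref{defghatstar}; at $\p \mid \cc$ pin $|x|_\p = \mathcal{N}(\p)^{rn^2}$ by the $t_\p$-integral, convert the ramified character sum to a hyper-Kloosterman sum and invoke Deligne (or an elementary computation when $e_\p \geq 2$); at archimedean $v \neq v_0$ apply Stirling and stationary phase, doing it two-dimensionally at complex places to recover the degree-of-field loss; at $v_0$ and $\p \in B_{\pi,K}$ shift the contour to the right. Two small flags: your appeal to ``Stirling'' for the unramified $r=0$ contribution at a finite prime is a slip — that term $A_0(x)$ is a ratio of local $L$-polynomials handled by a power-series expansion and the residue theorem, with the vanishing for $|x|_\p > \mathcal{N}(\p)^{n^2}$ coming from holomorphy. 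And for $v=\C$ you should state explicitly that the discrete $\delta_v$-sum over $\Z$ is first converted into a continuous $\sigma$-integral by Poisson summation (with only the zero dual frequency contributing non-negligibly); ``stationary phase jointly in $(\tau_v,\delta_v)$'' as written does not quite parse, since $\delta_v$ is integral. With those points made precise, the proof is sound and matches the paper.
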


\subsubsection{Trivial estimate} We start with the proof of \eqref{v0}. By the Jacquet-Shalika bounds $\gamma(1-s, \pi_v \otimes \chi_v \times \pi_v', \psi_v)$ is holomorphic in $\Re s \geq 1$, and for $v \mid \infty$  we have by  \eqref{infinite gamma} and a crude form of Stirling's formula  the bounds
\begin{equation}\label{crudestirling}
 \gamma(1-s,\pi_v\otimes\delta_v\times\widetilde{\pi}_v,\psi_v) \ll_{\pi_v} \begin{cases} (1 +|\tau|)^{n^2(\sigma - \frac{1}{2})}, & v=\R;\\
 																(\tau^2 + \delta_v^2)^{n^2(\sigma-\frac{1}{2})}, & v=\C, 
													\end{cases}
\end{equation} 
where $s = \sigma + i\tau$. 

If $v = v_0$, then by \eqref{v0decay} we can shift the contour in \eqref{defghatstar} to $\Re s = 1$ or $1  + A$, establishing \eqref{v0} in the case $v = v_0$. If $v$ is a finite ramified place, then by construction $\widehat{g}_{\p}$ is supported on characters of degree at most 1, and the same contour shift followed by a trivial estimate establishes the required bounds. 

It remains to prove \eqref{finite} and \eqref{infinite} which we do in the next to subsections. 

\subsubsection{Non-archimedean case} Using the explicit formula for $\gamma(1-s, \pi_\p\otimes \chi_\p \times\widetilde{\pi}_\p,\psi_\p)$ given in \eqref{unram chi gamma} and \eqref{ram chi gamma} we write
\begin{displaymath}
  g_{\mathfrak{p}}^{\ast}(x) =  \frac{1}{\phi(\mathfrak{p}^{e_{\mathfrak{p}}})} \sum_{0\leq \nu \leq e_{\mathfrak{p}}} A_{\nu}(x)
\end{displaymath}
where
\begin{displaymath}
  A_0(x) = \int_{\sigma - i\pi/\log \mathcal{N}(\mathfrak{p})}^{\sigma + i\pi/\log \mathcal{N}(\mathfrak{p})} \frac{L(s, \widetilde{\pi}_\p \otimes \overline{\chi}_\p \times \pi_\p)}{L(1-s, \pi_{\mathfrak{p}} \otimes \chi_\p \times\widetilde{\pi}_\p)} |x|_{\mathfrak{p}}^{-s} \log \mathcal{N}(\p) \frac{ds}{2\pi i}
\end{displaymath}
and 
\begin{displaymath}
  A_{\nu}(x) = \sum_{\deg(\delta_{\mathfrak{p}}) = \nu} \mathcal{G}(\delta_{\p}) \bar{\delta}_{\p}(x/|x|_{\p})  \int_{\sigma - i\pi/\log \mathcal{N}(\mathfrak{p})}^{\sigma + i\pi/\log \mathcal{N}(\mathfrak{p})} \mathcal{N}(\p^{e_{\p}})^{n^2(s-1)} |x|_{\mathfrak{p}}^{-s} \log \mathcal{N}(\p) \frac{ds}{2\pi i}
\end{displaymath}
for $\nu > 0$. We recall that $L(s, \widetilde{\pi}_\p \otimes \overline{\chi}_\p \times \pi_\p)/L(1-s, \pi_\p\otimes\chi_\p\times\widetilde{\pi}_\p)    = P(\mathcal{N}(\p)^s)/Q(\mathcal{N}(\p)^{-s})$ where $P, Q$ are two polynomials of degree $n^2$ and $Q(0) = 1$. Hence for $\Re s$ sufficiently large we can expand $1/Q$ into an absolutely convergent power series in $\mathcal{N}(\p)^{-s}$, integrate term by term, and a standard application of the residue theorem shows that $A_0(x) = 0$ if $|x|_{\p} > \mathcal{N}(\p)^{n^2}$. For $1 < |x|_{\p} \leq \mathcal{N}(\p)^{n^2}$ we shift the line of integration to some very large $A$, getting a negligible contribution. For $|x|_{\p} \leq 1$ we shift to $\sigma = 1$ and estimate trivially  $A_0(x) \ll   |x|_{\p}^{-1}$, $|x|_{\p} \leq 1$. 

We proceed to bound the terms $A_{\nu}(x)$ for $\nu \geq 1$ and distinguish two cases. If $e_{\p} = 1$, the integral vanishes unless $|x|_{\p} = \mathcal{N}(\p)^{n^2}$ in which case we get, by Deligne's bounds \cite{De} for Hyper-Kloosterman sums,
\begin{displaymath}
\begin{split}
   |x|_{\p} A_{1}(x) & =  \sum_{\deg(\delta_{\mathfrak{p}}) = 1} \mathcal{G}(\delta_{\p})^{n^2} \bar{\delta}_{\p}(x/|x|_{\p})    =    \sum_{\deg(\delta_{\mathfrak{p}}) \leq 1} \mathcal{G}(\delta_{\p})^{n^2} \bar{\delta}_{\p}(x/|x|_{\p}) -1 \\
   & = \phi(\p)\sum_{\substack{x_1, \ldots ,x_{n^2} \in U_{\p}\\ x_1 \cdots x_{n^2} = x/|x|_{\p}}} \psi_{\p}(\varpi_{\p}^{-1}(x_1 +\ldots + x_{n^2})) - 1 \ll \mathcal{N}(\p)^{(n^2+1)/2}=|x|_\p^{\frac{1}{2}+\frac{1}{2n^2}},
\end{split}    
\end{displaymath}
proving \eqref{finite}.

If $e_{\p} \geq 2$, \eqref{finite} can be established in an elementary manner. Again the integral vanishes unless $|x|_{\p} = \mathcal{N}(\p)^{e_\p n^2}$. It is now easy to see that the terms $A_{\nu}(x)$, $1 \leq \nu < e_{\p}$, vanish, and the term $A_{e_{\p}}$ can be bounded by a routine calculation. For $K=\Bbb{Q}$ complete details can be found in \cite{BB1}, and the general case differs only by notational changes.

\subsubsection{Archimedean case}
We begin by recalling the definition of $g^*_v$.  We have
\begin{equation}\label{g star sum}
g_{v}^{\ast}(x) = c_v \sum_{\delta_v\in \widehat{U_v}}\delta_v(x)^{-1} I(x,\delta_v),
\end{equation}
where $c_v=1/2$ or $1/(2\pi)$ for $v$ real or complex, respectively, and for $\sigma$ large enough,
\begin{equation}\label{ga}
I(x,\delta_v)=\int_{(\sigma)} \widehat{g}_v(1-s, \delta_v) \gamma(1-s,\pi_v\otimes \delta_v\times\widetilde{\pi}_v,\psi_v) |x|_v^{-s} \frac{ds}{2\pi i}.
\end{equation}
In fact, one can take $\sigma\geq 1$ by the Jacquet-Shalika bounds.  As usual we identify $\delta_v$ with the corresponding integer in $\{0,1\}$ or $\Z$ according to whether $v$ is real or complex, respectively.

When $|x|_v\leq 1$ we shift the contour in $I(x,\delta_v)$ to $\sigma = 1$ and obtain by a trivial estimate $g_v^{\ast}(x) \ll  |x|_v^{-1} T^{-1}$.  To handle large $|x|_v$, we first let $\varepsilon>0$ and $A>0$ be as in the statement of Proposition \ref{local lemma}.  By \eqref{crudestirling} and \eqref{g*inf} we obtain $g_{v}^{\ast}(x) \ll |x|_v^{-A} T^{n^2(\sigma - \frac{1}{2})}$.  If $\sigma$ is sufficiently large with respect to $\varepsilon$ and $A$ then this proves \eqref{infinite} in the range $|x|_v\geq T^{n^2 +\varepsilon}$.

It therefore remains to prove
\begin{equation}\label{intermediate}
g_v^{\ast}(x) \ll_\varepsilon  T^{-1}|x|_v^{-\frac{1}{2} + \frac{1}{2n^2}+\varepsilon},\quad 1<|x|_v<T^{n^2+\varepsilon},
\end{equation}
when $v\neq v_0$.  For convenience we fix $\sigma=1$ in the integral $I(x,\delta_v)$.  The proof of \eqref{intermediate} relies on the principle of stationary phase.  See, for example, \cite[Proposition 5.2]{DS} for a proof of the following result.

\begin{lemma}\label{statphase} Let $u\in C_c^{\infty}(\R^d)$ have support in a compact set $K$.  Let $\phi$ be a smooth real-valued function on $\R^d$ having a single non-degenerate critical point $y_0 \in K$ and no other critical point.  There is a constant $C$, depending only on $d$ and $K$, such that for $\lambda \geq 1$,
\begin{displaymath}
  \bigg|\int_K e^{i \lambda \phi(y)} u(y) dy - A\lambda^{-d/2} \bigg| \leq C\lambda^{-1-d/2} \| u\|_{S^{3+d}}.
\end{displaymath}
Here $A=(2\pi)^{d/2}u(y_0)\exp(i\frac{\pi}{4}{\rm sgn}(\phi(y_0)) + i\lambda\phi(y_0))\, |\det Q_\phi(y_0)|^{-1/2}$, with $Q_\phi$ the Hessian of $\phi$, and $\|u\|_{S^{k}}$ the $k$-th Sobolev norm of $u$.
\end{lemma}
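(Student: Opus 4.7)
The plan is to prove Lemma \ref{statphase} by the classical stationary-phase method: introduce a smooth partition of unity to separate the critical point from the rest of $K$, handle the non-critical region by integration by parts, and reduce the critical region to a Gaussian (Fresnel) integral via the Morse lemma.

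Concretely, I would fix $\chi \in C_c^\infty(\R^d)$ equal to $1$ in a small ball around $y_0$ and supported in a slightly larger ball on which the Morse lemma applies, and split
\begin{displaymath}
\int_K e^{i\lambda\phi(y)} u(y)\,dy = I_{\rm near} + I_{\rm far},
\end{displaymath}
where $I_{\rm near}$ and $I_{\rm far}$ correspond to $\chi u$ and $(1-\chi)u$, respectively. On the support of $(1-\chi)u$, the hypothesis that $y_0$ is the unique critical point forces $|\nabla\phi|$ to be bounded away from $0$, so the first-order operator $L = (i\lambda |\nabla\phi|^2)^{-1}\nabla\phi\cdot\nabla$ satisfies $L(e^{i\lambda\phi}) = e^{i\lambda\phi}$; iterating its transpose $N$ times yields $|I_{\rm far}| \ll_N \lambda^{-N}\|u\|_{C^N}$, which fits inside the target error for any $N \geq 1 + d/2$. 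For $I_{\rm near}$, the Morse lemma provides a local diffeomorphism $\Psi$ with $\Psi(0) = y_0$ and $\phi(\Psi(z)) = \phi(y_0) + \tfrac12 \langle Q_\phi(y_0) z, z\rangle$, and an orthogonal change of coordinates diagonalizes the quadratic form to $\tfrac12 \sum_j \epsilon_j z_j^2$ with $\epsilon_j \in \{\pm 1\}$. After these substitutions,
\begin{displaymath}
I_{\rm near} = e^{i\lambda\phi(y_0)}\int_{\R^d} e^{i\lambda\sum_j \epsilon_j z_j^2/2}\, v(z)\, dz,
\end{displaymath}
with $v(z) = \chi(\Psi(z))u(\Psi(z))|\det D\Psi(z)|$ and $v(0) = u(y_0)|\det Q_\phi(y_0)|^{-1/2}$. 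Taylor-expanding $v(z) = v(0) + \sum_j z_j w_j(z)$ with $w_j\in C_c^\infty$, the constant term together with the one-dimensional Fresnel identity $\int_\R e^{i\lambda\epsilon z^2/2}dz = (2\pi/\lambda)^{1/2} e^{i\pi\epsilon/4}$ applied in each coordinate yields precisely the main term $A\lambda^{-d/2}$. For each residual $z_j w_j(z)$, the identity $z_j e^{i\lambda\epsilon_j z_j^2/2} = (i\lambda\epsilon_j)^{-1}\partial_{z_j} e^{i\lambda\epsilon_j z_j^2/2}$ permits integration by parts in $z_j$, supplying an extra factor $\lambda^{-1}$ on top of the remaining $(d-1)$-dimensional Fresnel contribution, producing the claimed $\lambda^{-1-d/2}$ rate.

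The main obstacle is the Sobolev-norm bookkeeping needed to land on the exponent $3+d$: each of the $d$ directional integrations by parts costs one derivative of $u$ after composing with the fixed diffeomorphism $\Psi$, the Taylor remainder absorbs one more, and the change-of-variables step embeds two additional derivatives of $\phi$ into the resulting weight via the Jacobian and Morse chart. Once this accounting is made precise and the contributions from $I_{\rm far}$ are consolidated with those from $I_{\rm near}$, the estimate reduces to the standard stationary-phase formulation recorded in \cite[Proposition 5.2]{DS}.
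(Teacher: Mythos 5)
The paper does not prove Lemma \ref{statphase}; it simply refers to \cite[Proposition 5.2]{DS}. Your sketch reconstructs the classical stationary-phase argument underlying that reference, and the overall structure (cutoff near $y_0$, non-stationary phase via the first-order operator $L$, Morse reduction to a Fresnel integral, Hadamard/Taylor split, one more integration by parts for the error) is the right one and is essentially correct. A few points should be tightened if you wanted this to stand as a self-contained proof rather than a gloss on the citation.

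First, a Morse chart with $\phi\circ\Psi = \phi(y_0) + \tfrac12\langle Q_\phi(y_0)z,z\rangle$ followed by an \emph{orthogonal} change of variables does not produce $\tfrac12\sum_j\epsilon_j z_j^2$ with $\epsilon_j=\pm 1$; after diagonalizing you still have the eigenvalues $\lambda_j$ in front, and you need a further non-orthogonal rescaling $z_j\mapsto z_j/\sqrt{|\lambda_j|}$. That rescaling is precisely what contributes the $|\det Q_\phi(y_0)|^{-1/2}$ factor, since $D\Psi(0)=I$ in the Morse chart itself; as written, your identification $v(0)=u(y_0)|\det Q_\phi(y_0)|^{-1/2}$ is correct only after this extra step is incorporated into $v$. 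Second, the Hadamard decomposition $v(z)=v(0)+\sum_j z_j w_j(z)$ does not yield compactly supported $w_j$ even when $v\in C_c^\infty$; you should first peel off a cutoff $v(0)\chi_1(z)$ around the origin (so the constant term is paired with a compactly supported weight whose deviation from the full Fresnel integral is rapidly decaying), and apply Hadamard to the compactly supported remainder $v-v(0)\chi_1$. Third, the constant $C$ cannot really depend only on $d$ and $K$: the far-field integration by parts uses $\inf_{K\setminus B}|\nabla\phi|$ and higher derivatives of $\phi$, and the near-field estimate uses $|\det Q_\phi(y_0)|$ and the Morse chart; the lemma's wording is best read as suppressing the dependence on the (fixed) phase $\phi$. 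Finally, your derivation correctly produces $\exp(i\tfrac{\pi}{4}\,{\rm sgn}\,Q_\phi(y_0))$ for the oscillatory phase factor; the paper's ``${\rm sgn}(\phi(y_0))$'' in the formula for $A$ should indeed be the signature of the Hessian, which is a typographical slip in the statement.
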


Putting the integrals defining $g^*_v$ into the form required by Lemma \ref{statphase} proceeds essentially in two steps.  The first is to use Stirling's formula to explicate the corresponding phase function $\phi$; the second is to reduce the domain of integration to a compact set on which we have have good control of the size of $||u||_{S^{3+d}}$ and $|\det Q_\phi(y_0)|^{-1/2}$.  In our application, the amplitude function $u$ will actually depend on the parameter $\lambda$.  So the version of Lemma \ref{statphase} that we shall use will be of the following form.  Let $u(y,\lambda)\in C^\infty(\R^{d+1})$.  Suppose that 
\begin{enumerate}
\item\label{1cond} 
for all $\lambda\geq 1$ the support of $u(\cdot,\lambda)$ is contained in a fixed compact set $K\subset\R^d$;
\item\label{2cond} $||u(\cdot ,\lambda)||_{S^{3+d}}\ll 1$ uniformly in $\lambda$;
\item\label{3cond} $|\det Q(\phi(y_0))| \gg  1$.
\end{enumerate}
Then it follows immediately from Lemma \ref{statphase} that
\begin{equation}\label{big O}
\int_K e^{i \lambda \phi(y)} u(y,\lambda) dy=O(\lambda^{-d/2}).
\end{equation}

We now proceed to the proof of \eqref{intermediate} for the real and complex places.  The estimate \eqref{big O} will be used with $d=1$ and $d=2$, respectively.

\bigskip

\noindent {\bf Proof of \eqref{intermediate} if $v = \Bbb{R}$.}  We recall the expression \eqref{g star sum}.  We will only estimate the integral $I(x,\delta)$ for $\delta=0$, the case $\delta=1$ being similar.  We therefore write $I(x)=I(x,0)$ for notational simplicity.

We begin by isolating the region around the real axis where the quotient of Gamma functions fails to oscillate.  Let $\omega_0\in C^\infty_c(\R)$ be such that $\omega_0(\tau)=1$ on $|\tau|\leq M+1$ and $\omega_0(\tau)=0$ on $|\tau|\geq M+2$, where $M=\max|\Im\mu_j|$.  Put $\omega=1-\omega_0$.  Then 
\begin{equation*}
\int_\R \omega_0(\tau)\widehat{g_v}(-i\tau) \gamma(-i\tau,\pi_v\times\widetilde{\pi}_v,\psi_v)|x|^{-1-i\tau}d\tau=O(1/T|x|).
\end{equation*}
On the remaining integral, we can now insert the precise version of Stirling's formula (with oscillating factor) in the form of
\begin{displaymath}
 \gamma(-i\tau,\pi_v\times\widetilde{\pi}_v,\psi_v)=\alpha(\tau)|\tau|^{n^2/2}(|\tau|/2\pi e)^{in^2\tau},
\end{displaymath}
for $|\tau| \geq M+1$, where $\alpha(\tau)$ is a smooth bounded function satisfying $\alpha^{(j)}(\tau) \ll |\tau|^{-j}$ for integers $j\geq 0$.  Putting $f(\tau)=\widehat{g}_v(-i\tau) \alpha(\tau)\omega(\tau)$, we have $f^{(j)}(\tau) \ll_{j, A} T^{-1} |\tau|^{-j} (1+|\tau|/T)^{-A}$ by \eqref{g*inf}, and
\begin{displaymath}
I(x)=|x|^{-1}\int_\R f(\tau)|\tau|^{n^2/2}\exp\left(i\tau\left(n^2\log \frac{|\tau|}{2\pi e} -\log|x|\right)\right) d\tau+O\left(\frac{1}{T|x|}\right).
\end{displaymath} 
By a change of variables $\tau\mapsto 2\pi |x|^{1/n^2} \tau$ we obtain
\begin{equation}\label{integral}
I(x)= |x|^{-\frac{1}{2}+ \frac{1}{n^2}} \int_\R \tilde{f}(\tau,|x|) \exp(i |x|^{1/n^2} \phi(\tau)) d\tau+O(1/T|x|),
\end{equation}
where $\tilde{f}(\tau,|x|)=(2\pi)^{1+n^2/2} f(2\pi |x|^{1/n^2} \tau)|\tau|^{n^2/2}$ is supported on $|\tau| \gg |x|^{-1/n^2}$  and
\begin{equation*}
\phi(\tau) = 2 \pi  n^2 \tau (\log |\tau| - 1).
\end{equation*}
The phase function $\phi$ has a stationary point at $\tau_0=1$ and nowhere else.  This critical point is non-degenerate.

We apply a smooth partition of unity and cut out smoothly the interval $[1/2, 2]$ in \eqref{integral}. The remaining portion of \eqref{integral} is, by repeated integration by parts, $O(T^{-1} |x|^{-1/2+ \varepsilon})$, which is most easily seen by splitting the range of integration into dyadic intervals.   Hence \eqref{integral} equals
\begin{displaymath}
   |x|^{-\frac{1}{2} + \frac{1}{n^2}} \int_{1/4}^{3} u(\tau, |x|) \exp(i |x|^{1/n^2} \phi(\tau)) d\tau + O(T^{-1}|x|^{-1/2+\varepsilon})
\end{displaymath}
where $u(\cdot , |x|)$ has compact support in $(1/4, 3)$ on which we have the uniform size condition  $u^{(j)}(t, |x|) \ll_j T^{-1}$ for all $j \in \Bbb{N}_0$ and $\phi^{(j)}(t) \asymp_j 1 $ for $ j \geq 2$. Therefore we can apply \eqref{big O} in the case $d=1$  and bound the main term of \eqref{integral} by $O(T^{-1}|x|^{-\frac{1}{2} + \frac{1}{2n^2}})$.

\medskip

\noindent {\bf Proof of \eqref{intermediate} if $v = \Bbb{C}$.}  The complex case is very similar to the real case, but a few extra ingredients are necessary. We will highlight the main points, leaving the rest of the argument for the reader to fill in.

Using the notation of \eqref{g star sum} and \eqref{ga} and writing $\theta=\arg(z)\in [-\pi,\pi)$, we have
\begin{equation*}
  g_v^*(z)=\frac{1}{2\pi} \sum_{\delta\in \Z}e^{-i\delta\theta}I(z, \delta).
\end{equation*}
As in the real case we introduce   a smooth weight function $\omega$ to restrict the support of $\delta$ and $\tau = \Im s$ in the integral \eqref{ga} such that   $|\delta| \gg 1$ with an error of $O(T^{-1} |x|^{-\frac{1}{2} + \frac{1}{2n^2}})$.  
Combining positive and negative $\delta$, we obtain after  Poisson summation 
  \begin{equation}\label{complete}
  g_v^*(z)=   \sum_{\delta\in \Z} \widetilde{I}_{+}(z,\delta) +  \widetilde{I}_{-}(z,\delta),
  \end{equation}
where
\begin{equation*}
 \widetilde{I}_{\pm}(z,\delta)=\int_{-\infty}^{\infty} \int_0^{\infty} \omega(s) e^{\pm i\sigma(\delta-\theta)}\widehat{g_v}(-i\tau, -\sigma)\gamma(-i\tau ,\pi_v\otimes \sigma \times \widetilde{\pi}_v,\psi_v)|z|^{-2(1+i\tau)}d\sigma \, d\tau.
\end{equation*}
In the above integral we have written $s=\sigma+it$.  By  \eqref{infinite gamma}  we have
\begin{displaymath}
  \gamma(s, \pi_v \otimes \delta \times \tilde{\pi}_v,\psi_v) = \epsilon (s,\pi_v\times\widetilde{\pi}_v,\psi_v)i^{\delta}\prod_{j=1}^{n^2} \frac{\Gamma_\C(1-s - \mu_{\pi \times \tilde{\pi}}(v, j) + \delta/2)}{ \Gamma_\C(s - \mu_{\pi \times \tilde{\pi}}(v, j) + \delta/2)}
\end{displaymath}
for $\delta > 0$, and the latter expression makes perfect sense for $\delta = \sigma \in \Bbb{R}$. 

It is now easy to see that the $\delta$-sum in \eqref{complete} is rapidly converging, and $O_\varepsilon ((T|z|)^\varepsilon)$ of $\delta$ contribute non-negligibly to $g_v^*(z)$.  It suffices then to estimate $\widetilde{I}_{\pm}(z,\delta)$ for a single $\delta$.  For convenience of exposition we bound $\widetilde{I}(z) := \widetilde{I}_{+}(z, 0)$, all other cases are essentially identical.   Stirling's formula then reads 
\begin{displaymath}
\gamma(-i\tau ,\pi_v\otimes (2\sigma) \times\widetilde{\pi}_v,\psi_v)= \alpha(s)|s|^{n^2}\exp \left(2i n^2\left(\tau \log \frac{|s|}{2\pi e} +\sigma\arctan\frac{\tau}{\sigma}\right)\right)
\end{displaymath}
for $s\in \text{supp}(\omega)$, where $\alpha(s)$ is a smooth bounded function satisfying $\alpha^{(i, j)}(s) \ll_{\pi_v,i,j} |s|^{-i+j}$ for integers $i, j\geq 0$ and $|s|\geq M+1$. We must consider then the integral
\begin{equation*}
|z|^{-2}   \int_{-\infty}^{\infty}\int_0^{\infty} f(s)|s|^{n^2}\exp \left(2i n^2\left(\tau \log \frac{|s|}{2\pi e} +\sigma \left(\frac{\pi}{2}+ \arctan\frac{\tau}{\sigma}\right)\right)-2i\tau\log |z|-2i\sigma\theta\right)d\sigma \, d\tau,
  \end{equation*}
where $f(s)=\widehat{g_v}(-i\tau, -\sigma)\alpha(s)\omega(s)$.  Changing variables $s\mapsto 2 \pi |z|^{1/n^2}s$ we obtain
\begin{equation*}
|z|^{-1+\frac{2}{n^2}}   \int_{\R^2} \tilde{f}(s,|z|) \exp(i |z|^{1/n^2} \phi(s)) ds,
  \end{equation*}
where $\tilde{f}(s,|z|)=(2\pi)^2 f(|z|^{1/n^2}s)|s|^{n^2}$ is supported on $|\tau| \gg |z|^{-1/n^2}$ and
\begin{displaymath}
\phi(s) = 4\pi n^2 \left(\tau  (\log |s|-1) +\sigma\left(\frac{\pi}{2} - \frac{\theta}{n^2}+\arctan\frac{\tau}{\sigma}\right)\right).
\end{displaymath}

Now let us examine the critical points of $\phi$.  For $s \in \text{supp}(\omega)$  we have
\begin{displaymath}
  \nabla \phi(s) = 4\pi n^2 \begin{pmatrix} \arctan(\tau/\sigma) + \pi/2 - \theta/n^2 \\ \log |s|  \end{pmatrix}.
\end{displaymath}
There is at most one stationary point $\nabla\phi(s)=0$. 
Any such critical point is non-degenerate since the Hessian
\begin{displaymath}
Q_\phi(s) = \frac{4\pi n^2}{|s|^2}\begin{pmatrix} -\tau & \sigma\\ \sigma & \tau\end{pmatrix}
\end{displaymath}
has determinant $-4\pi n^2$ along $|s|=1$.

Having identified the critical points and established their non-degeneracy, the rest of the argument (partition of unity, integration by parts away from the critical point giving an error $O(T^{-1} |x|^{-\frac{1}{2} + \frac{1}{2n^2} + \varepsilon})$, application of \eqref{big O} with $d=2$ around the critical point) now follows that of the real case.
 
\subsection{$S$-adic estimate}

We now average the local estimates in Proposition \ref{local lemma} to obtain our next Corollary.  Recall the definition \eqref{capital Gs} of $G_S$. Henceforth we use the abbreviation $V$ for the volume
\begin{equation*}
V=V(\cc,D,T)\asymp \mathcal{N}(\cc)T^{r-1}.
\end{equation*} 

\begin{cor}\label{G star} For $|x|_S \geq 1$ we have
 \begin{equation}\label{GS}
  G_S^{\ast}(x) \ll_{A, \varepsilon, \pi, |S|} V^{\frac{n^2-1}{2}+\varepsilon}   |x|_S^{-1}  \Bigl(1+ \Bigl(\frac{|x|_S}{V^{n^2+\varepsilon}}\Bigr)^{-A}\Bigr)
  \end{equation} 
  for any $A\geq 0$, $\varepsilon > 0$. 
\end{cor}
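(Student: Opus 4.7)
The plan is to combine the pointwise estimates on $g_v^{\ast}$ furnished by Proposition \ref{local lemma} with a counting argument for the $S$-units contributing non-trivially to $G_S^{\ast}(x) = \sum_{u \in \mathcal{O}_S^{\times}} \prod_{v \in S} g_v^{\ast}(u x_v)$. The decisive preliminary remark is that for $u \in \mathcal{O}_S^{\times}$ one has $|u|_S = 1$ (by the product formula applied to $u \in K^{\times}$ together with $|u|_v=1$ outside $S$), so $|ux|_S = |x|_S$ is independent of $u$.

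First I would record a pointwise bound of the shape $|g_v^{\ast}(y)| \ll L_v \, |y|_v^{-1} \psi_v(y)$, where $L_\p = \phi(\p^{e_\p})^{-1}$ for $\p\mid\cc$, $L_v = T^{-1}$ for archimedean $v\neq v_0$, and $L_v = O_\pi(1)$ for $v=v_0$ or $v\in B_{\pi,K}$; and where $\psi_v$ is bounded by $1 + |y|_v^{1/2 + 1/(2n^2)+\varepsilon}$ on an effective support $|y|_v \leq C_v$ (with $C_\p = \mathcal{N}(\p)^{e_\p n^2}$ and $C_v = T^{n^2+\varepsilon}$ respectively), and by $(1+|y|_v)^{-A}$ at the remaining places (effectively $C_v=1$). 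Multiplying, $\prod_v L_v \ll V^{-1}$ and $\prod_v \psi_v(u x_v) \leq (\prod_v C_v)^{1/2+1/(2n^2)+\varepsilon} \ll V^{(n^2+1)/2 + \varepsilon}$, using $\prod_v C_v \ll V^{n^2+\varepsilon}$. Combining with $\prod_v |u x_v|_v^{-1}=|x|_S^{-1}$ yields the key pointwise bound $|g_S^{\ast}(ux)| \ll V^{(n^2-1)/2+\varepsilon}\, |x|_S^{-1}$ on the effective support.

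Next I would count the $S$-units contributing to the sum. Via $u\mapsto(\log|u|_v)_{v\in S}$, Dirichlet's $S$-unit theorem embeds $\mathcal{O}_S^{\times}/\mu$ as a lattice of rank $|S|-1$ in the trace-zero hyperplane $H=\{\sum_v y_v =0\}\subset\R^S$. The support conditions translate to $\log|u|_v \leq a_v := \log(C_v/|x_v|_v)$, and the hyperplane constraint forces $|x|_S \leq \prod_v C_v \ll V^{n^2+\varepsilon}$ as a necessary condition. When it holds, the admissible region is a bounded simplex in $H$ of diameter $O(\log V)$, so a standard lattice-point estimate gives $O((\log V)^{|S|-1}) = O_\varepsilon(V^\varepsilon)$ contributing units, yielding the main bound $V^{(n^2-1)/2+\varepsilon}|x|_S^{-1}$ in the range $|x|_S \leq V^{n^2+\varepsilon}$.

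For $|x|_S > V^{n^2+\varepsilon}$ the strict support is empty, so the contribution is controlled by the polynomial-decay factors at $v=v_0$, at the ramified $\p\in B_{\pi,K}$, and (beyond $T^{n^2+\varepsilon}$) at the archimedean $v\neq v_0$. Letting $S'$ denote the set of places where $|ux_v|_v > C_v$, and noting that $S'$ is disjoint from $\{\p\mid\cc\}$ (since $g_\p^{\ast}$ vanishes identically beyond $C_\p$), the identity $\prod_v|ux_v|_v = |x|_S$ forces $\prod_{v\in S'}|ux_v|_v/C_v \geq |x|_S/V^{n^2+\varepsilon}$; choosing $A$ sufficiently large in the polynomial decay factors then extracts the factor $(|x|_S/V^{n^2+\varepsilon})^{-A}$, which is the second summand in the stated bound. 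The main obstacle is the $S$-unit count: one must verify that the number of $u$ for which $|ux_v|_v$ lies within each cutoff is only $V^\varepsilon$, not a positive power of $V$. This is where the constraint $|u|_S = 1$ is crucial --- it collapses what would otherwise be a box of volume $\prod_v C_v$ into a codimension-one simplex of polylogarithmic diameter, precisely the "regularization" phenomenon advertised in the introduction.
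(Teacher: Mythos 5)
Your proof is correct and rests on the same two pillars as the paper's: the local estimates of Proposition \ref{local lemma}, and the observation (via the product formula) that $|u|_S=1$ for $u\in\mathcal{O}_S^\times$, which collapses the naive box of volume $\prod_v C_v$ to a codimension-one region of polylogarithmic diameter. The implementation of the unit-count step, however, is organized quite differently. The paper decomposes $\mathcal{O}_S^\times$ as $\mathcal{O}_K^\times$ times a set of representatives of $\mathcal{O}_S^\times/\mathcal{O}_K^\times$, invokes the Bruggeman--Miatello lemma (Lemma \ref{BM lemma}) to handle the archimedean $\mathcal{O}_K^\times$-average \emph{with the decay factors built in}, and then runs an induction over the valuation vectors $\ell\in\Z^{|S_{\rm fin}|}$ indexing the quotient. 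You instead work directly with the rank-$(|S|-1)$ lattice $\log\mathcal{O}_S^\times$ inside the trace-zero hyperplane of $\R^S$ and count lattice points in the simplex $\{y_v\leq a_v,\ \sum y_v=0\}$ of diameter $O(\log V)$, then treat the decay regime separately. Your version is conceptually cleaner and makes the ``regularization'' mechanism maximally transparent; the paper's version has the advantage that BM packages the soft archimedean cutoffs and the decay factor $(|x|_S/V^{n^2+\varepsilon})^{-A}$ in one stroke, so there is no need for the somewhat delicate bookkeeping you do by hand in the second paragraph (splitting off the set $S'$ of places where the cutoff is exceeded and extracting the decay from the product formula). If you write your argument out fully you should take a little care there --- e.g.\ split $A$ into $A_1+A_2$, using $A_1$ to extract $(|x|_S/V^{n^2+\varepsilon})^{-A_1}$ and $A_2$ to keep the lattice count polylogarithmic at each scale --- but the idea is sound. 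You should also make explicit that the simplex lattice-point bound is uniform in the relevant parameters, i.e.\ that the shortest nonzero vector of $\log\mathcal{O}_S^\times$ is $\gg_K 1$ (each finite place contributes at least $\log 2$ to any vector with a nonzero finite component, and the archimedean part is handled by the regulator of $K$), so that the count is $\ll_{K,|S|}(\log V)^{|S|-1}$.
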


Before proving the corollary we indicate the approach.  The function $g^{*}_S$ has essential support inside a box of volume about $V$. By Dirichlet's unit theorem, the $S$-units are logarithmically distributed in $K_S^{\times}$, and hence only $V^{\varepsilon}$ terms contribute in a non-negligible way.  This heuristic, at least for those units in $\mathcal{O}_K^\times$, has been made precise in a useful lemma of Bruggeman-Miatello \cite{BM}.

\begin{lemma}[Bruggeman-Miatello]\label{BM lemma}  Let $g : K_{\infty}^\times \rightarrow \Bbb{C}$ be a function satisfying $|g(x)| \leq \prod_{v \mid \infty} \min(1, |x_v|_v^{-A})$ for some $A\geq 0$. Then
\begin{displaymath}
  \sum_{u \in \mathcal{O}_K^\times} |g(ux)| \ll_A  \min (1+|\log |x|_{\infty}|^{r-1}, |x|_{\infty}^{-A}).
\end{displaymath}
\end{lemma}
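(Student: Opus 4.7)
The plan is to transport the sum from the idele group to the logarithmic lattice supplied by Dirichlet's unit theorem, and then compare the resulting lattice sum to an elementary integral on the trace-zero hyperplane.

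Set $\ell := \log|x|_\infty$ and decompose $\log x = \xi_0 + (\ell/r)\mathbf{1}$ with $\xi_0 \in \a_0 := \{y \in \a : \sum_v y_v = 0\}$ and $\mathbf{1} := (1,\ldots,1)$. After absorbing the finite torsion group $\mu$ into the implied constant, I would identify $\mathcal{O}_K^\times/\mu$ with its image $\Lambda_0$ under the log map \eqref{log}, a rank $(r-1)$ lattice in $\a_0$. The hypothesis on $g$ then gives the pointwise majorization
\begin{equation*}
\sum_{u \in \mathcal{O}_K^\times} |g(ux)| \; \ll \; \Sigma(x) := \sum_{\lambda \in \Lambda_0} F\bigl(\lambda + \xi_0 + (\ell/r)\mathbf{1}\bigr), \qquad F(y) := \prod_{v \mid \infty} \min\bigl(1, e^{-Ay_v}\bigr).
\end{equation*}
Since $F$ varies by only a bounded factor (depending on $A$ and $\Lambda_0$) across translates of a fixed fundamental domain of $\Lambda_0$, a standard sum-to-integral comparison yields $\Sigma(x) \ll_{A,K} \int_{\a_0} F\bigl(\eta + (\ell/r)\mathbf{1}\bigr)\,d\eta$.

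It now suffices to bound this integral in two ways. For the polynomial bound I would observe that $F \leq 1$ and that the locus $\{F \geq c_A\}$, intersected with the affine hyperplane $\{\sum_v y_v = \ell\}$, is a polytope of $(r-1)$-dimensional volume $O_A((1+|\ell|)^{r-1})$: each coordinate must lie below a fixed threshold, and subject to the trace constraint such a region has the claimed size. The rapid decay of $F$ outside this locus contributes only $O_A(1)$, giving $\Sigma(x) \ll (1+|\ell|)^{r-1}$. For the decay bound I would use the identity $F(y) = \exp\bigl(-A\sum_v y_v^+\bigr)$ together with the inequality $\sum_v y_v^+ \geq \bigl(\sum_v y_v\bigr)^+ = \ell^+$ to factor out $|x|_\infty^{-A} = e^{-A\ell}$ whenever $\ell \geq 0$, the leftover factor being a product of rapidly decreasing exponentials in directions transverse to the trace constraint, which integrates over the hyperplane to a quantity depending only on $A$ and $K$.

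The main obstacle is the sum-to-integral comparison: one must ensure that the lattice sum of $F$ does not overshoot its integral by more than a bounded factor, in a manner uniform in the shift $\xi_0 + (\ell/r)\mathbf{1}$. This is handled routinely by bounding the logarithmic derivatives of $F$ and invoking the mean value theorem on each tile of the fundamental-domain decomposition of $\a_0$. Taking the minimum of the two integral bounds then produces the estimate stated in the lemma.
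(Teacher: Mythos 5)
The paper itself does not prove this lemma; it is quoted from Bruggeman--Miatello \cite{BM}, so your argument has to stand on its own. The first half of your plan is sound: passing to the logarithmic lattice $\Lambda_0\subset\a_0$, the sum-to-integral comparison (legitimate because $y\mapsto\sum_v y_v^+$ is $1$-Lipschitz in each coordinate, so $F$ varies by a factor $O_{A,K}(1)$ on each fundamental cell), and the polynomial bound all work. One small caveat there: the contribution outside the locus $\{F\ge c_A\}$ is not $O_A(1)$ once $r\ge 3$; you need a layer-cake argument over the level sets of $\sum_v y_v^+$, whose $(r-1)$-volumes grow like $(t+|\ell|)^{r-1}$, but this still yields the bound $O_A\bigl((1+|\ell|)^{r-1}\bigr)$, so that estimate survives.

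The decay bound, however, has a genuine gap. After factoring out $e^{-A\ell}$ the leftover integrand is $\exp\bigl(-A\sum_v y_v^-\bigr)$ with $y_v^-=\max(-y_v,0)$, and its integral over $\{\sum_v y_v=\ell\}$ is \emph{not} $O_{A,K}(1)$: on the subset where every $y_v\ge -1$, which is a simplex of $(r-1)$-dimensional volume $\asymp (\ell+r)^{r-1}$, the integrand is $\ge e^{-Ar}$, so the leftover integral is $\gg_{A,K}(1+\ell^+)^{r-1}$. Moreover this cannot be repaired so as to reach the inequality exactly as printed: take $g$ equal to the majorant and $x$ with $|x_v|_v=|x|_\infty^{1/r}$ for all $v$; every unit $u$ with $|u_vx_v|_v\ge 1$ at all $v$ contributes exactly $|x|_\infty^{-A}$ (because $\prod_v|u_v|_v=1$), and by Dirichlet's unit theorem there are $\asymp_K\bigl(\log|x|_\infty\bigr)^{r-1}$ such units, so for $r\ge 2$ the left-hand side is $\gg \bigl(\log|x|_\infty\bigr)^{r-1}|x|_\infty^{-A}$, exceeding $C_A|x|_\infty^{-A}$ for $|x|_\infty$ large. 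What your method does prove, after the layer-cake fix, is
\begin{displaymath}
\sum_{u \in \mathcal{O}_K^\times} |g(ux)| \ll_A \bigl(1+|\log |x|_{\infty}|\bigr)^{r-1}\min\bigl(1, |x|_{\infty}^{-A}\bigr),
\end{displaymath}
i.e.\ the logarithmic factor multiplies the decay rather than sitting inside the minimum. This corrected form is the estimate you should aim for; it is also all that the paper's application needs, since in the proof of Corollary \ref{G star} the exponent $A$ is arbitrary and such polynomial-in-logarithm losses are absorbed into the $\varepsilon$-powers.
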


We now proceed to the proof of Corollary \ref{G star}.

\begin{proof} For $v \in S$ put
\begin{equation*}
{\rm vol}(g_v^*)= \begin{cases} T^{n^2+\varepsilon}, & \text{if } v\mid \infty,\;  v\not= v_0;\\
                             \mathcal{N}\p^{e_{\p}(n^2+\varepsilon)},& \text{if } v  = \p \text{ is finite};\\
                             1,& \text{if } v=v_0.
                             \end{cases}
\end{equation*}
Here we agree to set $e_{\p} = 0$ if $\p \in B_{\pi,K}$.  Let
\begin{equation*}
{\rm vol}(g_\infty^*)=\prod_{v \mid \infty} {\rm vol}(g_v^*)=T^{(r-1)(n^2+\varepsilon)}\quad\text{and}\quad {\rm vol}(g_S^*)=\prod_{v \in S} {\rm vol}(g_v^*) = V^{n^2+\varepsilon}.
\end{equation*}
The estimates of Proposition \ref{local lemma} yield
\begin{equation*}
g_S^{\ast}(x) \ll |x|_S^{-1 } V^{n^2(\frac{1}{2} - \frac{1}{2n^2}) +\varepsilon} M(x),
\end{equation*}
where
\begin{displaymath}
M(x)= \prod_{v \in S}M_v(x_v)\quad\text{and}\quad  M_v(x_v)= \left(1 + \frac{|x_v|_{v}}{{\rm vol}(g_v^*)}\right)^{-A}.
\end{displaymath}
We proceed  to estimate $\sum_{u\in \mathcal{O}_S^{\times}}M(ux)$. We index the sum by first fixing a set $\{u' \}$ of representatives of  $\mathcal{O}_S^\times/\mathcal{O}_K^\times$, and then summing over $uu'$ for $u\in\mathcal{O}_K^\times$.  By Lemma \ref{BM lemma} we have
\begin{displaymath}
\begin{split}
\sum_{u\in\mathcal{O}_K^\times}M_\infty(uu'x_\infty)\ll\min\Bigl( 1+ \Bigl|\log \frac{|u'x_\infty|_{\infty}}{{\rm vol}(g_\infty^*)}\Bigr|^{r-1}, \Bigl(\frac{|u'x_\infty|_{\infty}}{{\rm vol}(g_\infty^*)}\Bigr)^{-A}\Bigr). 
 \end{split}
\end{displaymath}
If $u'\in \mathcal{O}_S^{\times}/\mathcal{O}_K^\times$ has $\p$-valuation $k_\p$, say, then letting $\ell_{\p} := k_\p+e_\p n^2+v_\p(x_\p)$ we have
\begin{equation*}
 M_\p(u_\p x_\p)=(1 + \mathcal{N}\p^{-\ell_{\p}})^{-A}. 
\end{equation*}
We can now view the sum over $u'$ as a sum over certain integer vectors $\ell = (\ell_{\p})_{\p \in S_{\text{fin}}} \in \Bbb{Z}^{|S_{\text{fin}}|}$, where $S_{\text{fin}}$ denotes the set of finite primes in $S$.  If $u'$ corresponds to $\ell$ then
\begin{equation*}
\frac{|u'x_\infty|_{\infty}}{{\rm vol}(g_\infty^*)}= \frac{|x|_S}{V^{n^2+\varepsilon}}\prod_{\p\in S_{\text{fin}}}\mathcal{N}\p^{\ell_\p}:=X_\ell.
\end{equation*}
Thus we find
\begin{align*}
  \sum_{u \in \mathcal{O}_S^{\times}} M(ux) &\ll \sum_{\ell \in \Bbb{Z}^{|S_{\text{fin}}|}} \min\Bigl( 1+ |\log X_{\ell} |^{r-1},X_{\ell}^{-A}\Bigr)\prod_{\p \in S_{\text{fin}}} (1 + \mathcal{N}\p^{-\ell_{\p}})^{-A} \\
&\ll \min\Big(1+\left| \log \big(|x|_S/V^{n^2+\varepsilon}\big)\right|^{|S|-1}, \big(|x|_S/V^{n^2+\varepsilon}\big)^{-A}\Big),
\end{align*}
as one confirms easily by induction. This implies the Corollary.\end{proof}

\section{Proof of Theorem \ref{non-van thm}}\label{non-van section}

Let $0 < Y < 1$ be a parameter to be chosen later, and let $x \in \Bbb{I}$ be the idele satisfying 
\begin{equation}\label{defx}
 x_{v_0} = Y^{1/[K_{v_0}:\Bbb{R}]}; \quad  x_v = 1, \quad v \not= v_0.
\end{equation}
We choose ramification data as in Theorem \eqref{non-van thm} and as before abbreviate $V = V(\cc, D, T)$. We apply the summation formula in Proposition \ref{Voronoi} with the test function defined in Section \ref{sumform} and $x$ as in \eqref{defx}.  
By the positivity of the coefficients $\lambda_{\pi\times\widetilde\pi}(\a)$ and the functions $g_v$ we may drop all but the term corresponding to $\mathfrak{a} = (1)$  and $u=1$ in the sum \eqref{nothard1}  getting
\begin{equation}\label{bound1}
  G(x) \geq   g_{v_0}(Y^{1/[K_{v_0}:\Bbb{R}]}) \gg  Y^{-\beta}. 
\end{equation}
Next, using \eqref{GS} and \eqref{nothard2} together with the fact that $|x|_{\A} = |x|_S$ we estimate
\begin{equation*}
   |x|_\A^{-1}G^{\ast}(1/x) \ll V^{\frac{n^2-1}{2}+\varepsilon}   \sum_{\mathfrak{a}\subseteq\mathcal{O}_K} \frac{\lambda_{\pi \times \tilde{\pi}}(\mathfrak{a})}{\mathcal{N}(\a)} \left(1 + \frac{\mathcal{N}(\a)}{YV^{n^2+\varepsilon}}\right)^{-1}.
\end{equation*}
The absolute convergence of the Rankin-Selberg $L$-function to the right of $\Re s = 1$ implies
\begin{equation}\label{bound1a}
|x|_\A^{-1}G^{\ast}(1/x)\ll V^{\frac{n^2-1}{2}+\varepsilon}.
\end{equation}
Finally we come to the residual terms in Proposition \ref{Voronoi}.  It is easy to see that
\begin{equation}\label{bound2}
|x|_\A^{-1}(\underset{s=0}{\rm Res}+\underset{s=1}{\rm Res})\ll_\pi \widehat{g}_S(0, \text{triv})+|x|_\A^{-1}\widehat{g}_S(1, \text{triv})\asymp (YV)^{-1}.
\end{equation}
The remaining poles come exclusively from $\widehat{g}_{v_0}(s, \omega)$.   We first write $\omega = \prod_v \omega_v \in \widehat{\mathscr{C}^1(\cc)}$ where the component of $\omega_{v_0}$ at $v_0$ is $(i\tau_0, \delta_0)$, say.  Then  $\widehat{g_{v_0}}(s)$ has a pole at $s = \beta - i\tau_0$ if and only if $\delta_0 = 0$.  Thus  the  poles strictly within the critical strip contribute
\begin{equation}\label{before}
\begin{split}
& c_K \sum_{\substack{\omega \in \widehat{\mathscr{C}^1(\cc)}\\ \omega_{v_0} = (i\tau_0, 0)}} \bar{\omega}(x)   L^S(\beta - i\tau_0, \pi\otimes\omega\times\widetilde{\pi})\widehat{g}_{S\setminus \{v_0\}}(\beta-i\tau_0, \omega)|x|_{\Bbb{A}}^{\beta - i\tau_0}\\
& =  c_K Y^{-\beta}  \sum_{\substack{\chi \in \widehat{\mathscr{C}(\cc)}\\ \tau_{v_0}(\chi)=0}}  \bar{\chi}(x) L^S(\beta,  \pi\otimes\chi \times \widetilde{\pi}) \prod_{v \in S\setminus \{v_0\}} \widehat{g}_v(\beta, \chi)\\
& \ll_A (Y^{\beta}V)^{-1} \sum_{\substack{\chi \in \widehat{\mathscr{C}(\cc)}\\ \tau_{v_0}(\chi)=0}}   |L^S(\beta,  \pi\otimes\chi \times \widetilde{\pi})|\prod_{\substack{v\mid\infty\\ v\neq v_0}} \left(1 + \frac{\mathcal{C}(\chi_v)}{T}\right)^{-A}\\ 
& \ll_{\varepsilon} (Y^{\beta}V)^{-1} \sum_{\chi \in X(\cc, D, TV^{\varepsilon})}   |L^S(\beta,  \pi\otimes\chi \times \widetilde{\pi})| + V^{-100}.  
\end{split}
\end{equation}
Here we used \eqref{hatgfin}, \eqref{g*inf} and the definition \eqref{def of X}, and in the final step truncated the series at the cost of a negligible error.  Combining \eqref{bound1} -- \eqref{before}, we obtain
\begin{equation}\label{final}
\begin{split}
 &  \sum_{\chi \in X(\cc, D, TV^{\varepsilon}) }| L^S(\beta, \pi\otimes\chi \times \widetilde{\pi})  |    \gg V + O_\varepsilon (Y^{\beta-1} + V^{\frac{n^2+1}{2}+\varepsilon} Y^{\beta}). 
\end{split}  
\end{equation}
We choose $Y= V^{-\frac{n^2+1}{2}}$.  Then for $\beta > 1 - 2/(n^2+1)$ and $\varepsilon$ sufficiently small, the main term on the right hand side of \eqref{final} dominates the error term.  This completes the proof of Theorem \ref{non-van thm}.

\bibliographystyle{plain}

\printindex

\end{document}